
\documentclass{amsart}

\usepackage[pdfauthor={Dimitrios Chatzakos, Yiannis N. Petridis},
pdftitle={},
            pdfkeywords={Hyperbolic lattice points},
             pdfcreator={Pdflatex}]
{hyperref}
\hypersetup{colorlinks=true}
\usepackage{xcolor}
\usepackage[normalem]{ulem}
\usepackage{marginnote}
\usepackage{enumerate}

\usepackage{amssymb,amsmath,latexsym,amsthm}
\usepackage[english]{babel}

\newtheorem{theorem}{Theorem}[section]
\newtheorem{lemma}[theorem]{Lemma}
\newtheorem{proposition}[theorem]{Proposition}
\newtheorem{corollary}[theorem]{Corollary}
\theoremstyle{definition}
\newtheorem{definition}[theorem]{Definition}

\numberwithin{equation}{section}


\setlength{\textwidth}{6.0in} \setlength{\textheight}{8.0in}
\hoffset=-0.5truein \voffset=0.0truein

\begin{document}

\newtheorem{remark}[theorem]{Remark}



\def \g {{\gamma}}
\def \G {{\Gamma}}
\def \l {{\lambda}}
\def \a {{\alpha}}
\def \b {{\beta}}
\def \f {{\phi}}
\def \r {{\rho}}
\def \R {{\mathbb R}}
\def \H {{\mathbb H}}
\def \N {{\mathbb N}}
\def \C {{\mathbb C}}
\def \Z {{\mathbb Z}}
\def \F {{\Phi}}
\def \Q {{\mathbb Q}}
\def \e {{\epsilon }}
\def \ev {{\vec\epsilon}}
\def \ov {{\vec{0}}}
\def \GinfmodG {{\Gamma_{\!\!\infty}\!\!\setminus\!\Gamma}}
\def \GmodH {{\Gamma\backslash\H}}
\def \sl  {{\hbox{SL}_2( {\mathbb R})} }
\def \psl  {{\hbox{PSL}_2( {\mathbb R})} }
\def \slz  {{\hbox{SL}_2( {\mathbb Z})} }
\def \pslz  {{\hbox{PSL}_2( {\mathbb Z})} }
\def \L  {{\hbox{L}^2}}

\newcommand{\norm}[1]{\left\lVert #1 \right\rVert}
\newcommand{\abs}[1]{\left\lvert #1 \right\rvert}
\newcommand{\modsym}[2]{\left \langle #1,#2 \right\rangle}
\newcommand{\inprod}[2]{\left \langle #1,#2 \right\rangle}
\newcommand{\Nz}[1]{\left\lVert #1 \right\rVert_z}
\newcommand{\tr}[1]{\operatorname{tr}\left( #1 \right)}

\title[Hyperbolic lattice-point counting]{$\Omega$-results for the hyperbolic lattice point problem}
\author{Dimitrios Chatzakos}
\address{Department of Mathematics, University College London, Gower Street, London WC1E 6BT}
\email{d.chatzakos.12@ucl.ac.uk}
\thanks{The author was supported by a DTA from EPSRC during his PhD studies at UCL}
\date{\today}
\subjclass[2010]{Primary 11F72; Secondary 37C35, 37D40}

\begin{abstract}
For $\G$ a cocompact or cofinite Fuchsian group, we study the lattice point problem on the Riemann surface $\GmodH$. The main asymptotic for the counting of the orbit $\G z$ inside a circle of radius $r$ centered at $z$ grows like $c e^r$. Phillips and Rudnick studied $\Omega$-results for the error term and mean results in $r$ for the normalized error term. We investigate the normalized error term in the natural parameter $X=2 \cosh r$ and prove $\Omega_{\pm}$-results for the orbit $\G w$ and circle centered at $z$, even for $z \neq w$.
\end{abstract}
\maketitle
\section{Introduction}\label{Introduction}

Let $\mathbb{H}$ be the hyperbolic plane and $z$, $w$ two fixed points in $\mathbb{H}$. We denote by $\rho(z,w)$ their hyperbolic distance. For $\Gamma$ a cocompact or cofinite Fuchsian group, we are interested in the problem of estimating, as $r \to \infty$, the quantity
\begin{displaymath}
N_r(z,w)  = \# \{ \gamma \in \Gamma : \rho( z, \gamma w) \leq r \}.
\end{displaymath}
The study of the asymptotic behaviour of $N_r(z,w)$ is traditionally called the hyperbolic lattice point problem. This problem has been studied by many authors, see \cite{delsarte, good, gunther, huber1, patterson, selberg}. For notational reasons let $u(z,w)$ denote the point pair-invariant function
\begin{displaymath}
u(z,w) = \frac{|z-w|^2}{4 \Im(z) \Im(w)}.
\end{displaymath}
Then $\cosh \rho(z,w) = 2 u(z,w) +1$ and, after the change of variable $X=2\cosh r$, the problem is equivalent to studying the quantity
\begin{displaymath}
N(X;z, w)  = \# \{ \gamma \in \Gamma : 4u( z, \gamma w) + 2 \leq X \}.
\end{displaymath}
 as $X \to \infty$.

Let $\Delta$ be the Laplacian of the hyperbolic surface $\GmodH$ and let $ \{\lambda_j \}_{j=0}^{\infty}$ be the discrete spectrum of $-\Delta$. Write $\lambda_j= s_j(1-s_j) =1/4 +t_j^2$, and let $ u_j $ be the $\L$-normalized eigenfunction (Maass form) with eigenvalue $\lambda_j$. 
We have the following theorem.
\begin{theorem} [Selberg \cite{selberg}, G\"unther \cite{gunther}, Good \cite{good}] \label{mainformulaclassical} Let $\Gamma$ be a cocompact or cofinite Fuchsian group. Then:
\begin{eqnarray*} 
N(X;z,w)  =  \sum_{1/2 < s_j \leq 1} \sqrt{\pi} \frac{\Gamma(s_j - 1/2)}{\Gamma(s_j + 1)} u_j(z) \overline{u_j(w)} X^{s_j}  + E(X;z,w),
\end{eqnarray*}
with
 \begin{displaymath}
 E(X;z,w) = O(X^{2/3}).
\end{displaymath}
\end{theorem}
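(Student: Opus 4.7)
The plan is to approach this by spectral theory. The counting function can be written as
$$N(X;z,w) = \sum_{\gamma \in \Gamma} k_X(u(z,\gamma w)), \qquad k_X(u) = \mathbf{1}_{4u+2 \leq X},$$
an automorphic kernel associated with a sharp point-pair invariant. Since $k_X$ is not smooth, the Selberg spectral theorem does not give useful bounds when applied directly, and I would first introduce a parameter $\delta > 0$ and construct two radial functions $k_X^\pm$ with $k_X^- \leq k_X \leq k_X^+$, smooth in the hyperbolic distance variable and agreeing with $k_X$ outside an annular shell of radial width $\delta$. The associated automorphic kernels $K^\pm(z,w) = \sum_\gamma k_X^\pm(u(z,\gamma w))$ then sandwich $N(X;z,w)$ and are amenable to spectral expansion.

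Applying the Selberg spectral theorem, each $K^\pm(z,w)$ decomposes as $\sum_j h^\pm(t_j)\, u_j(z)\, \overline{u_j(w)}$ over the discrete spectrum, plus (in the cofinite case) an integral against Eisenstein series, where $h^\pm$ is the Selberg/Harish-Chandra transform of $k_X^\pm$. The main term comes from the finitely many exceptional eigenvalues $s_j \in (1/2, 1]$: a direct computation of the Selberg transform of the indicator of a hyperbolic disc of parameter $X$ gives
$$h(s) = \sqrt{\pi}\, \frac{\Gamma(s-1/2)}{\Gamma(s+1)}\, X^s + O(X^{s-1}),$$
and the smoothing adds at most $O(X^{s_j}\delta)$ per eigenvalue, so the sum over the exceptional spectrum reproduces the desired main term.

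For the error, I would control the contribution of the cuspidal spectrum with $|t_j|$ large using the asymptotic $h^\pm(t) \ll X^{1/2}|t|^{-3/2}$ valid for $|t| \leq \delta^{-1}$, together with rapid decay of $h^\pm(t)$ for $|t| \gtrsim \delta^{-1}$ obtained by integration by parts in the Harish-Chandra transform against the smoothed profile. Combining with the local Weyl bound $\sum_{|t_j| \leq T} |u_j(z)\overline{u_j(w)}| \ll T^2$ that follows from a positive test function in the pre-trace formula, a dyadic decomposition yields a total cuspidal contribution of $O(X^{1/2}\delta^{-1/2})$, while the smoothing error is $O(X\delta)$ (counting orbit points inside the shell of width $\delta$). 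Optimizing $\delta \asymp X^{-1/3}$ produces the exponent $E(X;z,w) = O(X^{2/3})$. In the cofinite case the Eisenstein contribution is estimated by the same mechanism and is no larger.

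The main obstacle will be the precise spectral estimation: establishing the uniform asymptotic $h^\pm(t) \sim X^{1/2}|t|^{-3/2}$ in the regime $1 \leq |t| \leq \delta^{-1}$ requires a careful stationary-phase or Mellin--Barnes analysis of a hypergeometric integral, and the local Weyl bound has to be applied at the optimal truncation height and uniformly for the base point $z$ varying in a compact set. Once these two ingredients are in place the optimization is routine, but without them the exponent $2/3$ cannot be reached.
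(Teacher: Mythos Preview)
The paper does not supply a proof of this theorem: it is stated as a classical result with attribution to Selberg, G\"unther, and Good, and the body of the paper takes it as input. So there is no ``paper's own proof'' to compare against.

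That said, your outline is precisely the standard argument one finds in the cited sources and in textbook treatments (e.g.\ Iwaniec, \emph{Spectral methods of automorphic forms}, \S 12, or Chamizo's exposition): mollify the sharp indicator $k_X$ by a radial bump of width $\delta$, spectrally expand the smoothed kernels, extract the main term from the finitely many small eigenvalues via the explicit Selberg transform of the disc indicator, bound the tempered spectrum by combining the estimate $h(t) \ll X^{1/2}|t|^{-3/2}$ in the range $|t| \lesssim \delta^{-1}$ with rapid decay beyond and the local Weyl law $\sum_{|t_j|\le T}|u_j(z)\overline{u_j(w)}| \ll T^2$, and finally balance $X\delta$ against $X^{1/2}\delta^{-1/2}$ at $\delta \asymp X^{-1/3}$. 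The two technical points you flag---the uniform asymptotic for $h^\pm(t)$ in the intermediate range and the local Weyl bound at the two base points---are exactly the substantive steps, and both are available in the literature (the paper itself later quotes the relevant Legendre-function asymptotic from Chamizo and the local Weyl law from Phillips--Rudnick). Your proposal is a faithful sketch of the classical proof.
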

 We are interested on the growth of the error term $E(X;z,w)$. Conjecturally
\begin{equation} \label{conjecture1}
E(X; z,w) = O_{\epsilon}(X^{1/2 + \epsilon})
\end{equation}
for every $\epsilon > 0$. 

Let $h(t) = h_X(t)$ be the Selberg/Harish-Chandra transform of the characteristic function $\chi_{[0, (X-2)/4]}$. Then, the error term has a {\lq spectral expansion\rq}, which for compact $\GmodH$ takes the form 
\begin{equation} \label{almostspectralexpansionerror}
E(X;z,w) =   \sum_{ t_j \in \R}  h(t_j) u_j(z) \overline{u_j(w)} + O \left( X^{-1} + \sum_{1/2<s_j \leq 1} \frac{X^{1-s_j} }{2s_j-1} \right),
\end{equation}
(see section \ref{result1compactcase}). The behavior of the terms coming from the eigenvalue $\lambda_j =1/4$ (corresponding to $t_j=0$) is well understood (see  \cite[p.~86, Lemma~2.2]{phirud}). It contributes
\begin{eqnarray*}
 h(0) \sum_{t_j=0} u_j(z) \overline{u_j(w)} = O ( X^{1/2} \log X ).
\end{eqnarray*}
 We subtract this quantity from $E(X;z,w)$ and we define the error term $e(X;z,w)$ to be the difference
\begin{equation} \label{seconderrorterm}
e(X;z,w) = E(X;z,w) -  h(0) \sum_{t_j=0} u_j(z) \overline{u_j(w)}.
\end{equation}
Thus, conjecture (\ref{conjecture1}) can be restated as
\begin{equation} \label{conjecture2}
e(X; z,w) = O_{\epsilon}(X^{1/2 + \epsilon})
\end{equation}
for every $\epsilon > 0$. 
The following result of Patterson supports this conjecture.
\begin{theorem} [Patterson, \cite{patterson}, \cite{patterson2}] \label{Patterson} Let $\G$ be a cocompact or cofinite Fuchsian group. Then the error term $ e(X; z,w)$ satisfies the average bound
\begin{displaymath}
\frac{1}{X} \int_{2}^{X} e(x;z,w) d x = O(X^{1/2} ).
\end{displaymath}
\end{theorem}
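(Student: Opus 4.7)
The plan is to start from the almost-spectral expansion (\ref{almostspectralexpansionerror}), subtract the $t_j=0$ contribution using (\ref{seconderrorterm}), and apply the averaging operator $\frac{1}{X}\int_2^X(\cdot)\,dx$ termwise. The decisive point is that for real $t_j\neq 0$ the Selberg/Harish-Chandra transform $h_X(t_j)$ of $\chi_{[0,(X-2)/4]}$ is oscillatory in $\log X$, and an additional integration in $X$ converts that oscillation into a gain of $(1+|t_j|)^{-1}$ on each spectral term.

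First I would derive an explicit large-$X$ asymptotic for $h_X(t)$ in the spectral region $t\in\R$. Using the Mehler--Fock inversion, equivalently the Legendre-function formula for the spherical functions on $\H$, one obtains
\[
h_X(t) = c_+(t)\, X^{1/2+it} + c_-(t)\, X^{1/2-it} + O(X^{-1/2}),
\]
with $|c_\pm(t)|\ll (1+|t|)^{-3/2}$. A direct integration then yields
\[
\frac{1}{X}\int_2^X h_x(t_j)\,dx = \sum_{\pm}\frac{c_\pm(t_j)\, X^{1/2\pm it_j}}{3/2\pm it_j} + O(X^{-1/2}) \;\ll\; \frac{X^{1/2}}{(1+|t_j|)^{5/2}},
\]
which is the crucial gain of $(1+|t_j|)^{-1}$ over the pointwise bound $X^{1/2}(1+|t_j|)^{-3/2}$.

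Next I would combine this with Cauchy--Schwarz on eigenfunction products and the local Weyl law $\sum_{T\le |t_j|\le T+1}|u_j(z)|^2\ll T$, which follows from the pretrace formula. Grouping eigenvalues in dyadic windows gives
\[
\sum_{t_j\neq 0}\frac{|u_j(z)\overline{u_j(w)}|}{(1+|t_j|)^{5/2}} \;\ll\; \sum_{T\ge 1}\frac{T}{T^{5/2}} \;<\;\infty,
\]
so the discrete-spectrum contribution to the average is $O(X^{1/2})$. The subordinate error terms in (\ref{almostspectralexpansionerror}) are already $O(X^{1/2})$ on average, and in the cofinite non-compact case the continuous-spectrum Eisenstein integral $\int_\R h_x(t)\, E(z,\tfrac12+it)\overline{E(w,\tfrac12+it)}\,dt$ is treated identically, with standard pointwise bounds for Eisenstein series replacing the local Weyl law.

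The main obstacle, I expect, is the rigorous interchange of $\frac{1}{X}\int_2^X dx$ with the spectral expansion of $e(x;z,w)$, since that expansion converges only in a distributional/mean-square sense. The cleanest workaround is to apply the averaging operator directly to the pretrace formula before any convergence question arises, so that the absolutely convergent estimate from the previous step is the one actually invoked; the regularised expansion then coincides with the one integrated termwise, and the bound $O(X^{1/2})$ follows.
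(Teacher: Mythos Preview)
The paper does not give its own proof of this statement: Theorem~\ref{Patterson} is simply quoted as a result of Patterson, with references \cite{patterson,patterson2}, and is only \emph{used} (together with integration by parts) to observe that $M(X;z,w)=O(1)$.

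That said, your argument is correct in outline, and it is essentially the computation the paper carries out in Section~\ref{result1compactcase} when it analyzes $M(X;z,w)$ rather than $\tfrac{1}{X}\int_2^X e(x;z,w)\,dx$. Your large-$X$ asymptotic for $h_X(t)$ is precisely the paper's formula (\ref{httransform}), with $c_\pm(t)=\sqrt{\pi}\,\Gamma(\pm it)/\Gamma(3/2\pm it)$ and Stirling giving $|c_\pm(t)|\ll(1+|t|)^{-3/2}$. The crucial gain of $(1+|t_j|)^{-1}$ upon averaging in $X$ is the paper's estimate (\ref{usefulestimate}). The local Weyl law you invoke is the paper's Theorem~\ref{localweylslaw}, and your Cauchy--Schwarz plus dyadic summation is exactly how the paper bounds tails such as the $O(A^{-1/2})$ terms in Section~\ref{result1compactcase}. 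Your remark about the continuous spectrum matches the treatment in subsection~\ref{result1cofinitecase}. Finally, your workaround for the interchange of $\tfrac{1}{X}\int_2^X dx$ with the spectral expansion---apply the averaging operator at the level of the automorphic kernel so that the resulting pretrace sum is absolutely convergent---is also what the paper does implicitly in arriving at (\ref{almostspectralexpansionerror2}).

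So there is nothing to compare against in the paper itself; your proposal is a valid proof and its ingredients coincide with those the paper uses for the closely related quantity $M(X;z,w)$.
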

Further, Phillips and Rudnick proved mean value results for the error term.
\begin{theorem} [Phillips-Rudnick, \cite{phirud}]\label{philrudn1} a) If $\G$ is cocompact, then
 \begin{equation} \label{mnrt1}
\lim_{T \to \infty} \frac{1}{T} \int_{0}^{T}  \frac{e(2 \cosh r ;z,z)}{e^{r/2}} d r =0.
\end{equation} 
\\b) If $\G$ is cofinite, then
 \begin{equation} \label{mnrt2}
\lim_{T \to \infty} \frac{1}{T} \int_{0}^{T}  \frac{e(2 \cosh r;z,z)}{e^{r/2}} dr = \sum_{\frak{a}}\left| E_{\frak{a}} (z, 1/2) \right|^2,
\end{equation}
where $E_{\frak{a}}(z,s)$ is the Eisenstein series corresponding to the cusp $\frak{a}$.
\end{theorem}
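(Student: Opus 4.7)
\emph{Proof plan.} The plan is to insert the spectral expansion (\ref{almostspectralexpansionerror}) into the normalized integral and analyze each term via the classical asymptotic of the Harish--Chandra transform $h_X(t)$. Computing $h_X$ as the composition of the standard transforms of $\chi_{[0,(X-2)/4]}$, one finds, after a Beta-integral evaluation,
\begin{equation*}
h_X(t) = \sqrt{\pi}\, \frac{\Gamma(it)}{\Gamma(3/2+it)}\, X^{1/2+it} + \sqrt{\pi}\, \frac{\Gamma(-it)}{\Gamma(3/2-it)}\, X^{1/2-it} + O(X^{-1/2}),
\end{equation*}
uniformly for real $t$ with polynomial dependence on $|t|$. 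Writing $X = 2\cosh r$ and $c(t) = \sqrt{\pi}\, \Gamma(it)/\Gamma(3/2+it)$, and using $X^{1/2}/e^{r/2}\to 1$ and $X^{\pm it} = e^{\pm itr}(1+O(e^{-2r}))$, the above becomes
\begin{equation*}
\frac{h_X(t)}{e^{r/2}} = c(t)\, e^{itr} + \overline{c(t)}\, e^{-itr} + o(1),
\end{equation*}
with $c(t) = 2/(it) + O(1)$ near $t = 0$ and smooth and bounded elsewhere; in particular, the singular part of $c(t)e^{itr} + \overline{c(t)}e^{-itr}$ near $t=0$ is the Dirichlet kernel $4\sin(tr)/t$.

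For part (a), if $\Gamma$ is cocompact there is no continuous spectrum and the small-eigenvalue error in (\ref{almostspectralexpansionerror}) is $O(X^{1/2-s_j}) = o(e^{r/2})$. Thus
\begin{equation*}
\frac{e(2\cosh r; z,z)}{e^{r/2}} = \sum_{t_j \neq 0} |u_j(z)|^2 \bigl[ c(t_j)\, e^{i t_j r} + \overline{c(t_j)}\, e^{-i t_j r} \bigr] + o(1),
\end{equation*}
and for each fixed $t_j \neq 0$ the Ces\`aro mean of $e^{\pm i t_j r}$ over $[0,T]$ is $O(1/(|t_j|T))$. Uniform tail control, obtained from Weyl's law and local $L^2$-means of Maass forms at $z$, then permits the interchange of limit and summation, giving the stated limit $0$.

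For part (b), with $\Gamma$ cofinite one adds the continuous-spectrum term $\frac{1}{4\pi}\sum_{\mathfrak{a}} \int_{\R} h(t)\,|E_{\mathfrak{a}}(z, 1/2+it)|^2\,dt$. Using the evenness of $|E_{\mathfrak{a}}(z,1/2+it)|^2$ in real $t$ and the expansion above, the singular piece at $t=0$ produces, at each cusp,
\begin{equation*}
\frac{1}{4\pi}\int_{\R} \frac{4\sin(tr)}{t}\, |E_{\mathfrak{a}}(z, 1/2+it)|^2\, dt \longrightarrow |E_{\mathfrak{a}}(z, 1/2)|^2
\end{equation*}
as $r\to \infty$, via the identity $\int_{\R} \sin(tr)/t\cdot g(t)\,dt \to \pi g(0)$; equivalently in the Ces\`aro mean by the Fej\'er kernel $(1-\cos(tT))/(\pi t^2 T)$. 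The remaining bounded oscillatory pieces yield $\cos(tr)$ and $\sin(tr)$ integrals that vanish by Riemann--Lebesgue, after one integration by parts in $t$ to compensate for the polynomial growth of $E_{\mathfrak{a}}$ along the critical line. Combined with the cuspidal sum (handled as in (a)), this produces $\sum_{\mathfrak{a}} |E_{\mathfrak{a}}(z, 1/2)|^2$.

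The main technical obstacle is justifying the interchange of the spectral sum and integral with the averaging and with the $T\to\infty$ limit: on the discrete side this depends on Weyl's law and local $L^2$-means of Maass forms, and on the continuous side on polynomial-in-$t$ bounds for $|E_{\mathfrak{a}}(z, 1/2+it)|$ that make the Eisenstein integrand amenable to dominated convergence. Once these are in place, the calculations above yield the stated Phillips--Rudnick limits.
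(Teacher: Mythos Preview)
The paper does not prove Theorem~\ref{philrudn1} itself; it is quoted from Phillips--Rudnick \cite{phirud}. Your outline matches their strategy on the discrete side and is correct there: the asymptotic for $h_X(t)$, the Ces\`aro mean of $e^{it_jr}$, and tail control via local Weyl's law (Theorem~\ref{localweylslaw}) and Stirling are exactly what is needed, and this is also how the paper handles the discrete contribution in the closely related Proposition at the end of Section~\ref{theclassiclaproblemerrorterm}.

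On the continuous side your Dirichlet/Fej\'er-kernel decomposition is a genuinely different route from the one Phillips--Rudnick (and this paper, in Lemma~\ref{selbergtransformconvergence} and the final Proposition) take. There one writes $|E_{\mathfrak a}(z,1/2+it)|^2 = |E_{\mathfrak a}(z,1/2)|^2 + \phi_{\mathfrak a}(t)$ with $\phi_{\mathfrak a}(t)=O(t)$ near $0$, and computes $\int_{\R} h(t)e^{-r/2}\,dt \to 4\pi$ \emph{directly} by Fourier inversion (this is \cite[Lemma~2.4]{phirud}), so the constant term produces $|E_{\mathfrak a}(z,1/2)|^2$ exactly; the $\phi_{\mathfrak a}$-piece then has an $L^1$ integrand (the $O(t)$ cancels the $1/t$ pole of $c(t)$, while Stirling gives $|t|^{-3/2}$ decay at infinity against the polynomial Eisenstein growth), and Riemann--Lebesgue finishes. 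Your approach instead splits $c(t)=2/(it)+c_0(t)$ and isolates $4\sin(tr)/t$. But then $c_0(t)=c(t)-2/(it)\sim -2/(it)$ at infinity, so neither the ``Dirichlet'' integral $\int_{\R} 4\sin(tr)\,t^{-1}|E_{\mathfrak a}(z,1/2+it)|^2\,dt$ nor the ``bounded'' remainder is absolutely convergent against the growing Eisenstein factor; the integration by parts you invoke is applied only to the bounded remainder and does not cover the singular piece. This is not fatal---localizing the splitting to a neighborhood of $t=0$ and keeping the full $c(t)$ outside, or simply switching to the $\phi_{\mathfrak a}$ subtraction, repairs it---but as written the continuous-spectrum step has a gap. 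The subtraction route is cleaner precisely because it tames the $t=0$ singularity and preserves the Stirling decay at infinity in one stroke.
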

In case $b)$, the  limit of the Phillips-Rudnick normalized average error term is positive only when $E_{\mathfrak{a}}(z,1/2) \neq 0$ for at least one cusp $\frak{a}$. Such an Eisenstein series is called a null-vector for $\G$. 

In this paper, by $\Omega$-results we mean lower bounds for the $\limsup |e(X;z,w)|$. That means, if $g(X)$ is a positive function,  we write $e(X;z,w) = \Omega (g(X))$ if and only if $e(X;z,w) \neq o(g(X))$, i.e.
\begin{eqnarray*}
\lim \sup \frac{|e(X;z,w)|}{g(X)} > 0.
\end{eqnarray*}
Phillips and Rudnick proved also the following $\Omega$-results for the error term $e(X;z,z)$.
\begin{theorem} [Phillips-Rudnick, \cite{phirud}]\label{philrudn2} a) If $\G$ is cocompact or a subgroup of finite index in $\pslz$, then for all $\delta >0$, 
 \begin{displaymath}
e(X;z,z) = \Omega \left(X^{1/2} (\log \log X)^{1/4 - \delta} \right).
\end{displaymath}
b) If $\G$ is cofinite but not cocompact, and either at least one eigenvalue $\lambda_j >1/4$ or a null-vector, then, 
 \begin{displaymath}
e(X;z,z) = \Omega \left(X^{1/2} \right).
\end{displaymath}
c) In any other cofinite case, for all $\delta>0$,
 \begin{displaymath}
e(X;z,z) = \Omega \left(X^{1/2-\delta} \right).
\end{displaymath}
\end{theorem}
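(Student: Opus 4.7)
The first move is to put everything in the variable $r$ via $X = 2\cosh r$ and analyze the spectral expansion (\ref{almostspectralexpansionerror}) of $e(X;z,z)$ (with the analogous Eisenstein-series contribution in the cofinite case). Standard asymptotics for the Selberg/Harish-Chandra transform $h_X(t)$ on the critical line give $h_X(t) = X^{1/2}(A(t)e^{irt} + \overline{A(t)}e^{-irt}) + O(X^{-1/2})$ with amplitude $|A(t)|$ of polynomial decay in $t$. Hence the rescaled error
\[
F(r) := \frac{e(X;z,z)}{X^{1/2}}
\]
is well-approximated by a generalized trigonometric polynomial in $r$ whose discrete frequencies are the spectral parameters $t_j > 0$, and whose continuous part (in the cofinite case) weights the Eisenstein density $M(t) := \sum_{\frak{a}}|E_{\frak{a}}(z, 1/2+it)|^2$. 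Each of the three parts of the theorem reduces to a statement about the non-smallness of $F(r)$.

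For part (b) in the null-vector case, Theorem \ref{philrudn1}(b) says the ergodic mean of $F(r)$ is $M(0) > 0$, which immediately rules out $F(r) = o(1)$. For part (b) with some $\lambda_j > 1/4$, I would compute the mean square: by Parseval for almost-periodic functions,
\[
\lim_{T \to \infty} \frac{1}{T}\int_0^T |F(r)|^2\, dr = \frac{1}{2}\sum_{t_j>0} |A(t_j)|^2 |u_j(z)|^4 + (\text{continuous contribution}) > 0,
\]
so again $F \not\to 0$. Part (c), where $M(0) = 0$ and the discrete sum over $t_j > 0$ is empty, is subtler: one must extract oscillation from the integral $\int h_X(t) M(t)\, dt$ alone. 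I would localize $h_X$ by a smooth cutoff $\psi$ supported around some $t_0 > 0$ with $M(t_0) > 0$ (such $t_0$ exists because $M$ is real-analytic and not identically zero on $\R$), then apply stationary-phase/oscillatory-integral asymptotics to produce an oscillatory contribution of size $X^{1/2}/(\log X)^{C\delta}$, the polylogarithmic loss absorbing possible high-order vanishing of $M$ and the shrinking of the support of $\psi$, which yields $\Omega(X^{1/2-\delta})$.

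Part (a) requires the quantitative iterated-logarithm improvement, and this is the main technical obstacle. The strategy is Dirichlet's simultaneous Diophantine approximation: for any $N$ one finds $r$ in a window $[1, C^N]$ at which the first $N$ phases $t_j r + \arg A(t_j)$ simultaneously lie in $(-\pi/3, \pi/3) \pmod{2\pi}$, forcing
\[
F(r) \gg \sum_{j=1}^{N} |A(t_j)|\,|u_j(z)|^2.
\]
Weyl's law plus local Weyl-type lower bounds $\sum_{t_j \le T} |u_j(z)|^2 \gg T^2$ (valid for cocompact $\G$ and for arithmetic $\G \subset \pslz$) give a lower bound on this partial sum of the form $N^{\alpha}$ for a suitable $\alpha > 0$; choosing $N \asymp \log \log X$ converts the window $C^N \asymp \log X$ into the claimed factor $(\log \log X)^{1/4-\delta}$, where the exponent $1/4-\delta$ arises from a careful balance between Dirichlet's box length and the available partial-sum bound. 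The obstacle is precisely this balance: the frequencies $\{t_j\}$ are not a priori $\Q$-independent, so the Dirichlet step gives a box length $C^N$ that cannot be substantially improved without arithmetic input, and consequently the partial-sum lower bound must be extracted purely from Weyl's law, which is what limits the exponent to $1/4-\delta$.
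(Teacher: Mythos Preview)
This theorem is not proved in the paper. It is stated as a result of Phillips and Rudnick \cite{phirud} and serves only as background; the paper's own contributions are Theorems~\ref{result1} and~\ref{result2}. There is therefore no proof in the paper to compare your proposal against. The only hint the paper gives about the Phillips--Rudnick argument is the remark following the statement of Theorem~\ref{philrudn2}, namely that they study the almost periodic function
\[
\sum_{0\neq t_j\in\R}\frac{|u_j(z)|^2}{t_j^{3/2}}\,e^{it_j\log X}
\]
in the variable $s=\log X$.

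Your sketch is broadly compatible with that hint. A few specific comments. For part~(a), your Dirichlet-box argument is indeed the mechanism; the exponent $1/4$ comes out exactly as you describe: local Weyl's law gives $\sum_{t_j\le T}|u_j(z)|^2\asymp T^2$, the amplitude decays like $t_j^{-3/2}$, so the aligned partial sum is $\asymp T^{1/2}\asymp N^{1/4}$ with $N$ the number of frequencies used, and $N\asymp\log\log X$ from the Dirichlet window. For part~(b) with a cuspidal eigenvalue, a mean-square/Parseval argument is a legitimate alternative to the Dirichlet approach, though one must be careful that $F(r)$ is only \emph{approximated} by an almost periodic series (there are continuous-spectrum and lower-order terms), so the Parseval identity needs justification beyond the formal level. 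For part~(c) your localization-plus-stationary-phase idea is plausible but the sentence about ``polylogarithmic loss absorbing possible high-order vanishing of $M$'' is doing a lot of unexplained work; this is the part of your sketch that is furthest from a proof.
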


We distinguish the two cases of $\Omega$-results: we write $e(X;z,w) = \Omega_{+}(g(X))$ if
\begin{eqnarray*}
\lim \sup \frac{e(X;z,w)}{g(X)} > 0,
\end{eqnarray*}
 and $e(X;z,w) = \Omega_{-}(g(X))$ if
\begin{eqnarray*}
\lim \inf \frac{e(X;z,w)}{g(X)} < 0.
\end{eqnarray*}
Instead of the normalization of Theorem \ref{philrudn1}, we are interested in studying the more natural normalization 
 \begin{equation} \label{mtzdefinition}
M(X;z,w) =   \frac{1}{X} \int_{2}^{X}  \frac{e(x;z,w)}{x^{1/2}} d x 
\end{equation}
as $X \to \infty$. Theorem \ref{Patterson} and integration by parts imply that $M(X;z,w) = O(1)$. 

For Theorem \ref{philrudn2}, after choosing $z=w$, Phillips and Rudnick work with the average of the function
\begin{equation}
\sum_{0 \neq t_j \in \R}  \frac{|u_j(z)|^2}{t_j^{3/2}}e^{i t_j \log X},
\end{equation} 
which is an almost periodic function in the variable $s=\log X$. In contrast with Theorem \ref{philrudn1}, we deal with $M(X; z,w)$ and for $z \neq w$. We prove that, under specific conditions, $M(X;z,w)$ does not have a limit.
\begin{theorem}\label{result1} Let $\G$ be a cocompact or cofinite Fuchsian group and $z$ a fixed point. Then there exists a fixed $\delta =\delta_{\G,z} >0$ such that for every point $w \in B(z,\delta)$ we have:
\\
a) if $\G$ is cocompact, then
\begin{displaymath}
M(X;z,w) = \Omega_{-}(1).
\end{displaymath}
\\
b) if $\G$ is cofinite and has at least one eigenvalue $\lambda_j>1/4$ with $u_j(z) \neq 0$, then
\begin{displaymath}
M(X;z,w)  - \sum_{\mathfrak{a}} \left|E_{\mathfrak{a}}(z,1/2) \right|^2 = \Omega_{-}(1).
\end{displaymath}
Further, for both cases, there exists a constant $a>1/4$ such that if $\lambda_1 >a$ the limit of $M(X;z,w)$ as $X \to \infty$ does not exist. 
\end{theorem}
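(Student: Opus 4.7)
The plan is to reduce both conclusions to exhibiting a non-vanishing term in an almost-periodic expansion of $M(X;z,w)$ in the variable $s=\log X$, whose frequencies are the spectral parameters $t_j$. I would first insert into \eqref{almostspectralexpansionerror} (supplemented by the Eisenstein integral in the cofinite case) the Stirling asymptotic
\begin{equation*}
h_X(t_j)=\frac{\sqrt{\pi}\,\Gamma(it_j)}{\Gamma(3/2+it_j)}X^{1/2+it_j}+\frac{\sqrt{\pi}\,\Gamma(-it_j)}{\Gamma(3/2-it_j)}X^{1/2-it_j}+O(X^{-1/2})
\end{equation*}
for the Selberg/Harish-Chandra transform of $\chi_{[0,(X-2)/4]}$, valid for $t_j$ real and bounded away from $0$. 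This writes $e(X;z,w)$ as $X^{1/2}$ times a conditionally convergent oscillating sum with coefficients proportional to $u_j(z)\overline{u_j(w)}/t_j^{3/2}$.

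Next I would substitute this into \eqref{mtzdefinition}. Each oscillatory piece $x^{1/2\pm it_j}$ produces $\tfrac{1}{X}\int_2^X x^{\pm it_j}\,dx=\tfrac{X^{\pm it_j}}{1\pm it_j}+O(1/X)$, while the contribution of the exceptional small eigenvalues $X^{1-s_j}$ integrates to $o(1)$, and the Eisenstein piece averages (by the Phillips--Rudnick computation producing \eqref{mnrt2}) to $\sum_{\mathfrak{a}}|E_{\mathfrak{a}}(z,1/2)|^2$. Patterson's Theorem~\ref{Patterson} together with integration by parts controls the spectral tail. The outcome is
\begin{equation*}
M(X;z,w)=C(z)+F_{z,w}(\log X)+o(1),
\end{equation*}
where $C(z)=0$ in case (a) and $C(z)=\sum_{\mathfrak{a}}|E_{\mathfrak{a}}(z,1/2)|^2$ in case (b), and $F_{z,w}$ is a real-valued uniformly almost-periodic function of mean zero whose Bohr--Fourier coefficient at frequency $t_j$ is an explicit non-zero constant times $u_j(z)\overline{u_j(w)}$.

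Since any real almost-periodic function of mean zero which is not identically zero satisfies $\liminf<0$, both assertions reduce to $F_{z,w}\not\equiv 0$. In case (b) the hypothesis provides an index $k$ with $t_k$ real and $u_k(z)\neq 0$; in case (a) such a $k$ exists because $\G$ cocompact has only finitely many exceptional eigenvalues and the heat-kernel identity $\sum_j|u_j(z)|^2 e^{-\lambda_j t}>0$ cannot be satisfied using only those. Picking $\delta=\delta_{\G,z}>0$ so small that $|u_k(z)\overline{u_k(w)}|\ge\tfrac12|u_k(z)|^2$ throughout $B(z,\delta)$ keeps the $t_k$-frequency coefficient non-zero, and Parseval for almost-periodic functions forces the mean square of $F_{z,w}$ to be positive. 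The threshold $a>1/4$ in the last clause is any value for which $t_1=\sqrt{\lambda_1-1/4}$ exceeds the cutoff below which the Stirling asymptotic for $h_X(t_j)$ would need additional correction.

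The main obstacle is the upgrade from formal manipulation of the non-absolutely convergent spectral sum to an honest uniform almost-periodic approximation: one must truncate the spectrum at height $T$, bound the tail $t_j>T$ in mean square by a Parseval/Weyl-type estimate uniform in $X$, and only then let $T\to\infty$. This is the technical heart of the proof, mirroring the cutoff argument of Phillips--Rudnick, now adapted to the variable $s=\log X$ and to the off-diagonal case $z\neq w$.
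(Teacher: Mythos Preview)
Your route differs substantially from the paper's. The paper does not invoke general almost-periodic theory or Parseval; its engine is Lemma~\ref{gammalemma}, the explicit sign identity
\[
\Re\!\left(\frac{\Gamma(it)}{\Gamma(3/2+it)(1+it)}\right)<0\quad\text{for all }t\in\mathbb{R},
\]
combined with Dirichlet's box principle. After expanding $M$ spectrally and truncating at height $A$ via local Weyl's law and Cauchy--Schwarz, the paper chooses $R$ so that every $e^{it_jR}\approx 1$, collapsing the truncated sum to $\sum_{0<t_j<A}|u_j(z)|^2\,\Re(\cdots)$, which is strictly negative by the sign lemma. Your argument replaces this with the soft fact that a real Bohr a.p.\ function of mean zero and not identically zero has $\liminf<0$; this is legitimate and in some sense cleaner, since it bypasses Lemma~\ref{gammalemma} entirely. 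One caveat in case~(b): for $w\neq z$ the Eisenstein integral in \eqref{mtzdefinition} naturally averages to $\sum_{\mathfrak{a}}E_{\mathfrak{a}}(z,1/2)\overline{E_{\mathfrak{a}}(w,1/2)}$ rather than $\sum_{\mathfrak{a}}|E_{\mathfrak{a}}(z,1/2)|^2$, so your $F_{z,w}$ as written is not mean-zero; you must either recenter at the correct constant or absorb the discrepancy into the choice of $\delta$, as the paper does via its $\epsilon_1$-approximation.

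Your treatment of the final clause about $a$ is where there is a genuine gap. The constant $a$ in the paper has nothing to do with the range of validity of the Stirling expansion of $h_X$; it is $a=1/4+c^2$ where $c$ is the threshold (Remark~\ref{remarkforc}) above which $\Re\bigl(\frac{\Gamma(it)}{\Gamma(3/2+it)}\frac{it}{1+it}\bigr)<0$. The paper needs this because it proves non-existence of the limit by \emph{differentiating} the finite trigonometric polynomial $S_{z,A}(R)$ in $R$, applying Dirichlet's principle to the derivative, and invoking this second sign condition to see the derivative is non-zero; hence $S_{z,A}$ takes two distinct values, each of which is approached infinitely often. You should notice that your own framework actually makes the hypothesis $\lambda_1>a$ superfluous: once you know $F_{z,w}$ is uniformly a.p., mean-zero, and not identically zero, it automatically has $\liminf<0<\limsup$, so the limit of $M$ cannot exist. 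That is a genuine strengthening over what the paper states, but you have not observed it, and your stated explanation of $a$ is simply wrong.
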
 
For the exact value of the constant $a$ see section \ref{lemmasproofs}. 
In many cases, Theorem \ref{result1} implies as an immediate corollary $\Omega$-results pointwise for $e(X;z,w)$ with $w \in B(z,\delta)$.
\begin{corollary} \label{coroll}
With the notation of Theorem \ref{result1} we have:
\\a) if $\G$ is either i) cocompact, or ii) as in case $b)$ of Theorem \ref{result1} and does not have null-vectors, then
 \begin{displaymath}
e(X;z,w) = \Omega_{-} \left(X^{1/2} \right)
\end{displaymath}
for every $w \in B(z,\delta)$.
\\
b) if $\lambda_1 >a$, then
 \begin{displaymath}
e(X;z,w) = \Omega \left(X^{1/2} \right)
\end{displaymath}
for every $w \in B(z,\delta)$.
\end{corollary}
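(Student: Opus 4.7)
The plan is to deduce both parts of the corollary from Theorem \ref{result1} by a contradiction argument that simply undoes the averaging in the definition of $M(X;z,w)$. The basic fact I will lean on is an elementary Abelian principle: if $\liminf_{X\to\infty} e(X;z,w)/X^{1/2} \geq -c$ for some $c\geq 0$, then splitting the integral in (\ref{mtzdefinition}) at a threshold $X_0$ beyond which $e(x;z,w)/x^{1/2} > -c-\varepsilon$ gives, on letting $X\to\infty$ and then $\varepsilon\to 0$, that $\liminf_{X\to\infty} M(X;z,w) \geq -c$. A symmetric statement controls $\limsup$. In particular, $e(X;z,w) = o(X^{1/2})$ forces $M(X;z,w) \to 0$.

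With this in hand, part (a) falls out by contradiction. Suppose $e(X;z,w)$ is not $\Omega_{-}(X^{1/2})$, so $\liminf_{X\to\infty} e(X;z,w)/X^{1/2} \geq 0$. In case (i), the principle above gives $\liminf M(X;z,w) \geq 0$, contradicting the $\Omega_{-}(1)$ statement of Theorem \ref{result1}(a). In case (ii), the assumption of no null-vectors makes $\sum_{\mathfrak{a}} |E_{\mathfrak{a}}(z,1/2)|^2$ vanish, so Theorem \ref{result1}(b) again asserts $\liminf M(X;z,w) < 0$, and we conclude the same way. Part (b) is even softer: if $e(X;z,w) = o(X^{1/2})$ then $M(X;z,w) \to 0$ by the principle above, contradicting the non-existence of the limit of $M(X;z,w)$ when $\lambda_1 > a$.

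I do not anticipate a real obstacle here, because Corollary \ref{coroll} is a purely soft consequence of Theorem \ref{result1}. The only point of bookkeeping is to check that the ball radius $\delta = \delta_{\Gamma,z}$ produced by Theorem \ref{result1} is the same in both parts of the corollary, so that the pointwise $\Omega$-results hold uniformly for every $w \in B(z,\delta)$; this is transparent since we inherit the same $\delta$ without modification.
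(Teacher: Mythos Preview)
Your argument is correct and is exactly the deduction the paper has in mind: the corollary is stated there as an ``immediate'' consequence of Theorem~\ref{result1} with no separate proof, and your Abelian/Ces\`aro-type observation that control of $e(X;z,w)/X^{1/2}$ from below (resp.\ $o$-control) passes to $\liminf M(X;z,w)$ (resp.\ forces $M(X;z,w)\to 0$) is precisely how one unpacks that implication. Nothing further is needed.
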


Corollary \ref{coroll} does not 
cover all cases of cofinite Fuchsian groups. However, using a more careful analysis of $e(X;z,w)$, there are some more cases of cofinite groups for which we can deduce $\Omega$-results for $e(X;z,w)$. For this purpose, we have the following definition, which is related to Weyl's law (see Theorem \ref{localweylslaw}).

 \begin{definition} \label{sufficientlymany}
Let $\G$ be a cofinite Fuchsian group. We say that $\G$ has sufficiently many cusp forms at the point $z$ if 
\begin{eqnarray*}
\sum_{|t_j| <T} |u_j(z)|^2  \gg T^2. 
\end{eqnarray*} 
\end{definition}
We prove the following result.
\begin{theorem}\label{result2} Let $\G$ be a cofinite but not cocompact Fuchsian group and $z \in \H$ fixed. Then, there exists a fixed $\delta =\delta_{\G,z} >0$ such that for every point $w \in B(z,\delta)$ we have:
\\a) if $\G$ has sufficiently many cusp forms at $z$, then
 \begin{displaymath}
e(X;z,w) = \Omega_{-} \left(X^{1/2} \right).
\end{displaymath}
\\b) if $\G$ has null vectors, then
 \begin{displaymath}
e(X;z,w) = \Omega_{+} \left(X^{1/2}\right).
 \end{displaymath}
\end{theorem}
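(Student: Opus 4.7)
My plan is to refine the spectral analysis behind Theorem \ref{result1} in order to obtain \emph{pointwise} bounds on $e(X;z,w)$ rather than bounds on its average $M(X;z,w)$. The starting point is the extension of (\ref{almostspectralexpansionerror}) to the cofinite case,
\[
e(X;z,w) = \sum_{t_j \neq 0} h(t_j)\, u_j(z)\overline{u_j(w)} + \frac{1}{4\pi}\sum_{\mathfrak{a}}\int_{-\infty}^\infty h(r) E_{\mathfrak{a}}(z,\tfrac12+ir)\overline{E_{\mathfrak{a}}(w,\tfrac12+ir)}\,dr + O(1),
\]
into which I would substitute the asymptotic $h(t) = c_+ X^{1/2+it}/t^{3/2} + c_- X^{1/2-it}/t^{3/2} + O(X^{1/2}/t^{5/2})$ for $t$ bounded away from $0$. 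I then fix $\delta=\delta_{\G,z}>0$ small enough that for every $w\in B(z,\delta)$ one has simultaneously (i) $\mathrm{Re}(u_j(z)\overline{u_j(w)})\ge\tfrac12|u_j(z)|^2$ for the first $N_0=N_0(\G,z)$ cusp forms (by continuity of the $u_j$), and (ii) $\mathrm{Re}\,Q(z,w)\ge\tfrac12\sum_{\mathfrak a}|E_{\mathfrak a}(z,\tfrac12)|^2$, where $Q(z,w):=\sum_{\mathfrak a}E_{\mathfrak a}(z,\tfrac12)\overline{E_{\mathfrak a}(w,\tfrac12)}$.

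For part (b), the key point is that the Eisenstein integral near $r=0$ contributes a \emph{non-oscillating} term of order $Q(z,w)\,X^{1/2}$, which is the pointwise analogue of the Phillips--Rudnick average identity (\ref{mnrt2}). More precisely, I expect that an adaptation of the Phillips--Rudnick argument to the pairing $Q(z,w)$ will give
\[
\lim_{T\to\infty}\frac{1}{T}\int_0^T \frac{e(2\cosh r;z,w)}{e^{r/2}}\,dr=\mathrm{Re}\,Q(z,w),
\]
which by (ii) is strictly positive for $w\in B(z,\delta)$. Since the $\limsup$ of any function with a positive Cesàro mean is itself positive, one concludes $\limsup_{X\to\infty}e(X;z,w)/X^{1/2}\ge\tfrac12\sum_{\mathfrak a}|E_{\mathfrak a}(z,\tfrac12)|^2>0$, giving $e(X;z,w)=\Omega_+(X^{1/2})$.

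For part (a), the hypothesis $\sum_{|t_j|\le T}|u_j(z)|^2\gg T^2$ and partial summation yield
\[
\sum_{|t_j|\le T}\frac{|u_j(z)|^2}{t_j^{3/2}}\gg T^{1/2},
\]
which is unbounded in $T$. I would then apply Kronecker's simultaneous approximation theorem to the first $N$ frequencies $\{t_j\}_{j\le N}$ (working in the closure of the orbit if they are $\mathbb{Q}$-dependent) to produce arbitrarily large $s_n=\log X_n$ at which each $\cos(t_j s_n+\arg c_+)$ lies close to $-1$ for $j\le N$. Combined with step (i), the initial-segment contribution to $e(X_n;z,w)/X_n^{1/2}$ is then bounded above by $-\tfrac{c}{2}\sum_{|t_j|\le T_N}|u_j(z)|^2/t_j^{3/2}\to-\infty$ as $N\to\infty$. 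A Parseval $L^2$-estimate on the cusp-form tail and on the oscillatory part of the Eisenstein integral controls the remaining modes in mean square; choosing $T_N$ to grow slowly compared to $s_n$ ensures that the main negative term dominates the $Q(z,w)X^{1/2}$ residue and the tail corrections. Therefore $\liminf_{X\to\infty}e(X;z,w)/X^{1/2}=-\infty$, which is in particular $\Omega_-(X^{1/2})$.

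The hard part is the rigorous control of the uncontrolled modes in (a) and of the residue term in (b). In (a) the Parseval bound on the tail $t_j>T_N$ must be balanced against the growth of the initial sum, so $T_N$ must be chosen slowly enough to keep the tail small while still allowing effective Kronecker alignment on the initial segment; this is the non-diagonal ($z\neq w$) analogue of the Phillips--Rudnick Bohr argument. In (b) one must verify that the $h(0)$ subtraction in (\ref{seconderrorterm}) removes only the discrete $\lambda_j=1/4$ contribution and leaves intact the positive $Q(z,w)X^{1/2}$ residue arising from the Eisenstein integral near $r=0$; this is a careful local expansion of $h(r)$ and of the Eisenstein integrand at zero and is where extending the Phillips--Rudnick cocompact-or-diagonal analysis to the present setting is most delicate.
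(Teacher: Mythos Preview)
Your treatment of part (b) is essentially the paper's argument: one extends the Phillips--Rudnick mean identity (\ref{mnrt2}) from the diagonal to nearby $w$ by continuity of the Eisenstein series at $s=1/2$, obtaining a positive Ces\`aro mean and hence $\Omega_+(X^{1/2})$. This is fine.

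Part (a), however, has a genuine gap. Your plan is to use Kronecker's theorem to force the phases $\cos(t_j s_n+\arg c_+)$ close to $-1$ simultaneously for $j\le N$. This is not generally possible: the orbit closure of $(t_1 s,\ldots,t_N s)\bmod (2\pi)^N$ is a \emph{connected subgroup} of the torus, and there is no reason it should contain the specific point $(\pi-\arg c_+,\ldots,\pi-\arg c_+)$. Saying ``work in the closure of the orbit'' does not rescue this; the closure always contains $0$, but typically not the point you want. The paper avoids this obstacle by a different mechanism: it uses Dirichlet's box principle (Lemma \ref{dirichletsboxprinciple}) to make $e^{it_jR}\approx 1$ for all $t_j<A$, which is always achievable, and then invokes the fixed-sign property of Lemma \ref{gammalemma}(a), namely $\Re\bigl(\Gamma(it)/\Gamma(3/2+it)\bigr)<0$ for every real $t$, so that \emph{every} term in the cusp-form sum is automatically negative at this $R$. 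This ``fixed sign'' lemma is the key idea you are missing, and it is precisely what makes unconditional alignment unnecessary.

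There are two secondary problems. First, you propose to work directly with the spectral expansion of $e(X;z,w)$, but the cusp-form sum $\sum_j h(t_j)u_j(z)\overline{u_j(w)}$ is not absolutely convergent (the partial sums grow like $T^{1/2}$ by local Weyl's law), and your ``Parseval $L^2$ tail bound'' is not specified; the paper handles this by mollifying with $\psi_\epsilon$, which introduces the damping factor $\hat\psi_\epsilon(t_j)=O((\epsilon|t_j|)^{-k})$ and makes the truncation rigorous. Second, your step (i) fixes $\delta$ so that $\mathrm{Re}(u_j(z)\overline{u_j(w)})\ge\tfrac12|u_j(z)|^2$ only for the first $N_0$ forms, yet later you send $N\to\infty$ to make the initial segment diverge; with $\delta$ fixed you cannot control $u_j(w)$ for arbitrarily many $j$, so your claimed conclusion $\liminf=-\infty$ is not justified for a fixed $w$. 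The paper instead fixes $A$ and $\epsilon_0$ once and for all, obtaining only $\Omega_-(1)$ for $\widehat e_{n,\epsilon_0}$, which is what the theorem asserts.
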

Hence, we conclude that:
\begin{corollary} \label{lastcorollary}
If $\G$ is cofinite but not cocompact, has null vectors and sufficiently many cusp forms at $z$, then 
 \begin{displaymath}
e(X;z,w) = \Omega_{\pm} \left(X^{1/2}\right).
 \end{displaymath}
\end{corollary}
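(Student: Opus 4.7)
The plan is direct: Corollary \ref{lastcorollary} will be an immediate consequence of Theorem \ref{result2} once both of its hypotheses are assumed simultaneously. Under the corollary's assumptions that $\G$ is cofinite but not cocompact, has null vectors, \emph{and} has sufficiently many cusp forms at $z$, both parts (a) and (b) of Theorem \ref{result2} apply to the same pair $(z,w)$ for every $w$ in the same ball $B(z,\delta)$; here $\delta$ is simply the common radius $\delta_{\G,z}$ already supplied by Theorem \ref{result2}. Part (a) then yields $\liminf_{X \to \infty} e(X;z,w)/X^{1/2} < 0$, while part (b) yields $\limsup_{X \to \infty} e(X;z,w)/X^{1/2} > 0$. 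Together these two statements are, by definition, $e(X;z,w) = \Omega_{\pm}(X^{1/2})$, so no further work is required at the corollary's level.

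It is worth noting that $w$ need not equal $z$, so the corollary provides two-point oscillation information of both signs, extending the diagonal $\Omega$-results of Phillips--Rudnick (Theorem \ref{philrudn2}) off the diagonal in a controlled neighbourhood.

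The substance therefore lies in Theorem \ref{result2}, and that is where the hard part will be. For part (a), my plan would be to exploit the spectral expansion (\ref{almostspectralexpansionerror}) of $e(X;z,w)$ together with the lower bound $\sum_{|t_j|<T} |u_j(z)|^2 \gg T^2$: treating the cusp-form sum as an almost-periodic function of $\log X$ and running a Dirichlet box / Bohr-type argument should produce a sequence of $X$ along which its contribution dips below $-c\,X^{1/2}$, with the continuous-spectrum (Eisenstein) terms controlled so as not to cancel this. For part (b), my plan would be to use the fact, already visible from Theorem \ref{philrudn1}(b), that null vectors contribute the strictly positive constant $\sum_{\mathfrak{a}} |E_{\mathfrak{a}}(z,1/2)|^2$ to the normalized average of $e(X;z,w)/X^{1/2}$; isolating a subsequence on which this positive bias dominates the oscillatory cusp-form contribution should then deliver $\Omega_{+}(X^{1/2})$. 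I expect the main technical obstacle in each half of Theorem \ref{result2} to be the uniform control of the Eisenstein integral in the spectral expansion, since it is precisely that continuous contribution that must be shown not to conspire against the cusp-form or null-vector effect one is trying to detect. Once both one-sided bounds of Theorem \ref{result2} are in hand, Corollary \ref{lastcorollary} follows directly.
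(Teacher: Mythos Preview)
Your proposal is correct and matches the paper's approach exactly: the paper offers no separate proof of Corollary~\ref{lastcorollary}, simply introducing it with ``Hence, we conclude that'' immediately after Theorem~\ref{result2}, and your first paragraph reproduces precisely this logic. Your additional sketch of how one would establish Theorem~\ref{result2} (Dirichlet box argument on the cusp-form sum for part~(a), exploiting the positive Eisenstein bias from Theorem~\ref{philrudn1}(b) for part~(b), with the main difficulty being uniform control of the continuous-spectrum contribution) is accurate and aligns with the paper's actual proof in Section~\ref{theclassiclaproblemerrorterm}, though strictly speaking it lies outside the scope of the corollary itself.
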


 The proofs of Theorems \ref{result1} and \ref{result2} depend crucially on specific {\lq fixed sign\rq} properties of the $\Gamma$-function. We summarize these properties in section \ref{lemmasproofs} (Lemma \ref{gammalemma}). We prove Theorem \ref{result1} in section \ref{result1compactcase} and Theorem \ref{result2} in section \ref{theclassiclaproblemerrorterm}. Our detailed analysis of both the discrete and the continuous spectrum in section \ref{theclassiclaproblemerrorterm} can be used to give a second proof of case $a)$ of Corollary \ref{coroll}.

\begin{remark}
In the case of the Euclidean circle problem, Hardy was the first who proved $\Omega$-results for the error term. In fact, in \cite{hardy1} he proved that the error term is $\Omega_{\pm}(X^{1/4})$ and in \cite{hardy2} that it is $\Omega_{-}(X^{1/4} (\log X)^{1/4})$. These results have been improved several times, see \cite{berndt} for a detailed discussion.

In Theorem \ref{philrudn2}, in case $a)$ Phillips and Rudnick prove a $\Omega_{-}$-result, in case $b)$ the sign of the $\Omega$ depends on the group, whereas in case $c)$, the sign cannot be determined by their method. Although they do not mention anything for the sign of their $\Omega$-results, in case that the group $\G$ is as in Corollary \ref{lastcorollary}, their method implies a $\Omega_{\pm}$-result for $e(X;z,z)$. Our analysis allows us to prove a $\Omega_{\pm}$-result for $w \in B(z, \delta)$, which is one of the main reasons we choose to emphasize the sign of our $\Omega$-results. 
\end{remark}

\begin{remark} 
There are specific arithmetic groups for which we know they satisfy the conditions of Corollary \ref{lastcorollary}. 
If $\Gamma$ is a subgroup of $\slz$ of finite index, then it follows from \cite{huang},\cite{young} that in many
cases $\Gamma$ has sufficiently many cusp forms at $z$ for $z$ remaining in a compact subset of the surface.
Further, every $\G (N)$ with $N=5$ or $\geq 7$ has null vectors. For $\G_{0}(N)$, there are also groups having null vectors, for instance $\G_{0}(25)$. For further discussion on the null vectors of these arithmetic groups see \cite[p.~80-81]{petridis}, \cite[p.~151-153]{huxley}.

For $\Gamma = \slz$ and certain other groups it is conjectured that the real Satake parameters $t_j$ are linearly independent over $\mathbb{Q}$. Such a conjecture would allow to apply Kronecker's theorem \cite[p.~510, theorem 444]{hardy} and find a sequence of $R_m \to \infty$ such that the exponentials $\{ e^{it_j R_m} \}_{j=1}^{n}$  approach the point $-1$ simultaneously (see Lemma \ref{dirichletsboxprinciple} in section \ref{result1compactcase}). Using the fixed sign properties of the $\Gamma$-function from section \ref{lemmasproofs}, this would allow to substitute $\Omega_{-}$ and $\Omega_{+}$ in Theorems \ref{result1} and \ref{result2} with $\Omega_{\pm}$.
\end{remark}

\begin{remark}
In \cite{cramer}, Cram\'er studied the normalized error term of the Chebyshev's prime counting function $\psi(x)$. He proved that
 \begin{displaymath}
\frac{\psi(e^x)-e^x}{e^{x/2}}
 \end{displaymath}
has mean square average \cite[p.~148, eq.~(1)]{cramer}, whereas 
 \begin{displaymath}
\frac{\psi(u)-u}{u^{a}}
 \end{displaymath}
does not have mean square average for $a<1$ \cite[p.~148, eq.~(2)]{cramer}. For the hyperbolic lattice point problem Theorems \ref{philrudn1} and \ref{result1} show that a similar phenomenon appears for the error term $e(X;z,w)$. 
\end{remark}


\subsection{Acknowledgments} I would deeply like to thank my supervisor Y. Petridis for his guidance and encouragement. I would also like to thank G. Sakellaris for his valuable help on some technical details of this paper and the anonymous referee for his helpful comments.

\section{Lemmas}\label{lemmasproofs}

One of the key ingredients in the proofs of our results is the following lemma.
\begin{lemma}\label{gammalemma} For every $t \in \R$, we have:
\\
a) \begin{equation} \label{lemmagammaequation1}
\Re\left( \frac{\G(it)}{\G(3/2+it)} \right) < 0,
\end{equation}
b) \begin{equation} \label{lemmagammaequation2}
\Re\left( \frac{\G(it)}{\G(3/2+it) (1 +it)} \right) < 0.
\end{equation}
\end{lemma}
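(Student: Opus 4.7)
The plan is to express each ratio as an explicitly convergent series whose terms all have a definite sign, so that (a) and (b) become manifest.

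For part (a), I would begin with the Euler beta identity applied after one integration by parts. Since $\G(it)=\G(1+it)/(it)$ and $B(1+it,1/2)=\G(1+it)\G(1/2)/\G(3/2+it)$, one gets
\[
\frac{\G(it)}{\G(3/2+it)} = \frac{1}{it\sqrt{\pi}}\,B(1+it,1/2).
\]
Next I would apply the substitution $x=\sin^{2}\phi$ followed by $v=-\log\sin\phi$, converting the (now convergent) beta integral into a Laplace-type integral
\[
B(1+it,1/2) = 2\int_{0}^{\infty}\frac{e^{-2v(1+it)}}{\sqrt{1-e^{-2v}}}\,dv.
\]
The decisive step is to expand
\[
\frac{1}{\sqrt{1-e^{-2v}}}=\sum_{k=0}^{\infty}\frac{(2k)!}{4^{k}(k!)^{2}}\,e^{-2kv},
\]
in which every coefficient is positive, and to integrate term by term against $e^{-2v(k+1+it)}$. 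This yields the absolutely convergent expansion
\[
\frac{\G(it)}{\G(3/2+it)} = \frac{1}{\sqrt{\pi}}\sum_{k=0}^{\infty}\frac{(2k)!}{4^{k}(k!)^{2}\,it(k+1+it)}.
\]
A routine partial-fraction computation gives $\Re(1/(it(k+1+it)))=-1/((k+1)^{2}+t^{2})$, so every term of the real part is strictly negative, proving (a).

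For part (b), I would use $1/(1+it)=(1-it)/(1+t^{2})$, so that $\Re(A/(1+it)) = (\Re A + t\,\Im A)/(1+t^{2})$ with $A=\G(it)/\G(3/2+it)$. The imaginary part of $A$ comes from the same series via $\Im(1/(it(k+1+it)))=-(k+1)/(t((k+1)^{2}+t^{2}))$, and combining, the coefficient of $1/((k+1)^{2}+t^{2})$ in the $k$-th term becomes $1+(k+1)=k+2>0$. One again obtains a manifestly negative series, proving (b).

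The only technical obstacle is the regularization at the start: the formally natural beta integral $B(it,3/2)$ is divergent at $0$, so one must perform one integration by parts (passing to $B(1+it,1/2)$) before the analysis is justified. Once that is done, all series manipulations are absolutely convergent by dominated convergence, and no delicate oscillatory cancellations need to be estimated: positivity of the central binomial coefficients $(2k)!/(4^{k}(k!)^{2})$ together with positivity of the denominators $(k+1)^{2}+t^{2}$ does all the work.
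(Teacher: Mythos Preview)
Your argument is correct and is genuinely different from the paper's. The paper also passes through the Beta function and the regularization $B(it,3/2)\to B(1+it,\cdot)$, but from there it rewrites the real part as an oscillatory integral in $\cos(t\log s)$ and $\sin(t\log s)$, integrates by parts, substitutes $s=e^{x/t}$, and then invokes an auxiliary sign lemma (Lemma~\ref{sublemma}: if $f$ is increasing then $\int_{-\infty}^{0} f(x)\sin x\,dx<0$) to conclude. Your route instead expands $(1-e^{-2v})^{-1/2}$ by the binomial series with positive central-binomial coefficients, integrates term by term, and reads off the sign from the elementary identity $\Re\big(1/(it(k+1+it))\big)=-1/((k+1)^2+t^2)$; part (b) then follows by the same series after the partial fraction for $1/(1+it)$. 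The trade-off: your proof is shorter and entirely self-contained (no Lemma~\ref{sublemma} needed), and it even produces explicit closed-form negative series for the two real parts, which could be useful for quantitative bounds; the paper's oscillatory-integral/monotone-$f$ approach is less explicit but is in principle more portable to other weight exponents where a clean series expansion is unavailable. One small point worth stating in a final write-up: at $t=0$ the quantity $\Gamma(it)/\Gamma(3/2+it)$ has a simple (purely imaginary) pole, and your series shows that the \emph{real} part extends continuously and stays negative there, so the inequality holds for all $t\in\R\setminus\{0\}$ and by continuity of the real part at $t=0$.
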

The proof of Lemma \ref{gammalemma} uses an elementary result about real functions.
\begin{lemma}\label{sublemma} Let $f : (-\infty , 0) \to \mathbb{R}$ be a continuous and strictly increasing real valued function such that $f(x) \sin(x)$ is integrable in $(-\infty, 0)$. Then 
 \begin{displaymath}
\int_{-\infty}^{0} f(x)  \sin(x) dx < 0.
\end{displaymath}
\end{lemma}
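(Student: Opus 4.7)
The plan is to exploit the $\pi$-antiperiodicity of $\sin$ together with the strict monotonicity of $f$ to rewrite the integral as a sum of strictly negative pieces. First I would split
\[
\int_{-\infty}^{0} f(x)\sin(x)\,dx = \sum_{k=0}^{\infty} \int_{-(k+1)\pi}^{-k\pi} f(x)\sin(x)\,dx,
\]
where the right-hand side is interpreted as the limit of the partial sums $\int_{-N\pi}^{0} f(x)\sin(x)\,dx$, which converges by the integrability hypothesis. On each interval $(-(k+1)\pi,-k\pi)$ the function $\sin$ has constant sign $(-1)^{k+1}$.

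Next I would pair consecutive intervals. In the second integral of each pair I would substitute $y = x+\pi$, so that $\sin(x) = -\sin(y)$, giving
\[
\int_{-(2k+1)\pi}^{-2k\pi} f(x)\sin(x)\,dx + \int_{-(2k+2)\pi}^{-(2k+1)\pi} f(x)\sin(x)\,dx = \int_{-(2k+1)\pi}^{-2k\pi} \bigl(f(y)-f(y-\pi)\bigr)\sin(y)\,dy.
\]
On the interval $(-(2k+1)\pi,-2k\pi)$ we have $\sin(y)\le 0$ with strict inequality on the interior, while strict monotonicity of $f$ gives $f(y)-f(y-\pi) > 0$ everywhere. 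Hence the combined integrand is continuous, non-positive, and strictly negative on a set of positive measure, so each paired integral is strictly negative.

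Summing over $k \ge 0$ then expresses the original integral as a sum of strictly negative terms, which must be strictly negative, yielding the conclusion. The only technical point is to justify the pairing and the change of variables at the level of improper integrals; this I would handle by first working with the truncated integrals $\int_{-2N\pi}^{0}$ (where pairing is a finite, hence harmless, rearrangement) and then passing to the limit using the integrability assumption. I do not expect any real obstacle here — the sign argument is the entire content of the lemma, and once the pairing is in place the strict inequality falls immediately out of the strict monotonicity of $f$.
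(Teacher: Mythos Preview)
Your proposal is correct and essentially identical to the paper's argument: the paper also splits $(-\infty,0)$ into intervals of length $2\pi$, uses the shift $x\mapsto x-\pi$ to combine the two halves into $\int_{-2n\pi-\pi}^{-2n\pi}\bigl(f(x)-f(x-\pi)\bigr)\sin(x)\,dx$, and then invokes strict monotonicity of $f$ together with $\sin(x)<0$ on $(-2n\pi-\pi,-2n\pi)$. Your added remark about handling the improper integral via truncation to $[-2N\pi,0]$ before passing to the limit is a point the paper leaves implicit.
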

\begin{proof} (of Lemma \ref{sublemma})
 Since $f(x) \sin(x)$ is integrable in $(-\infty,0)$, we split the integral as 
\begin{eqnarray*} 
\int_{-\infty}^{0} f(x) \sin(x) dx &=& \sum_{n=0}^{\infty} \int_{-2 (n+1) \pi}^{-2n \pi} f(x) \sin(x) dx \\
&=&  \sum_{n=0}^{\infty}  \int_{-2n \pi - \pi}^{-2n \pi } \left( f(x) - f(x-\pi) \right) \sin(x) dx .
\end{eqnarray*} 
Since $f$ is strictly increasing and $\sin(x)$ is negative in the interval $(-2n\pi - \pi, -2n \pi)$\textcolor{red}{,} the statement follows. 
\end{proof}

\begin{proof} (of Lemma \ref{gammalemma})
$a)$ Since $\G(\overline{z}) = \overline{\G(z)}$, it suffices to prove the Lemma for $t >0$. Using \cite[p.~909, eq.~(8.384.1)]{gradry} we get
\begin{displaymath}
 \frac{\G(it)}{\G(3/2+it)} = \frac{2}{\sqrt{\pi}} B(it, 3/2),
\end{displaymath}
where $B(x,y)$ is the Beta function. By the definition of the Beta function \cite[p.~908, eq.~(8.380.1)]{gradry} and the formula
 \begin{displaymath}
B(x+1,y) = B(x,y) \frac{x}{x+y}
\end{displaymath}
we see that inequality (\ref{lemmagammaequation1}) is equivalent with
\begin{eqnarray*}
 \frac{2t}{3} \int_{0}^{1}\cos (t \log s ) (1-s)^{1/2} ds + \int_{0}^{1} \sin (t \log s) (1-s)^{1/2} ds < 0.
\end{eqnarray*}
Using integration by parts, setting $s=e^{x/t}$ and 
applying Lemma \ref{sublemma} for $f(x)= \frac{1}{t} (1-e^{x/t})^{1/2} e^{x/t}  - \frac{2}{3} ((1-e^{x/t})^{1/2} e^{x/t})' $, part $a)$ follows.
\\
$b)$ It follows in the same way. Using the formula

\begin{displaymath}
 \frac{\G(it)}{\G(3/2+it) (1 +it)} = -\frac{ i+t}{\sqrt{\pi} t (1+t^2)} B(1+it, 1/2),
\end{displaymath}
it follows that (\ref{lemmagammaequation2}) is equivalent with
\begin{eqnarray*}
-t \int_{0}^{1}\cos (t \log s ) (1-s)^{-1/2} ds + \int_{0}^{1} \sin (t \log s) (1-s)^{-1/2} ds < 0.
\end{eqnarray*}
\end{proof}
\begin{remark} \label{remarkforc}
 There exists a positive constant $c>0$ such that for $|t|>c$ we have
\begin{equation} \label{lemmagammaequation3}
\Re\left( \frac{\G(it)}{\G(3/2+it)}\frac{it}{ 1 +it} \right) < 0.
\end{equation}
This can be deduced easily from Stirling's formula. Using Lemma \ref{gammalemma} and working as in $a)$ we can get the desired result for $c \approx 2.30277...$, whereas in fact $c$ can be numerically found to be around $\approx 1.59135...$ using Mathematica. For this $c$, we can choose $a$ of Theorem \ref{result1} by $a =1/4+c^2 \approx 2.7823... $ (see section  \ref{result1compactcase}). In particular, $\slz$ satisfies the condition $\lambda_1>a$.
\end{remark} 

We will also use the following local Weyl's law for $\L(\GmodH)$ (see \cite[p.~86, Lemma~2.3]{phirud}).

\begin{theorem}[Local Weyl's law] \label{localweylslaw} For every $z$, as $T \to \infty$,
\begin{eqnarray*}
\sum_{|t_j| <T} |u_j(z)|^2 + \sum_{\frak{a}} \frac{1}{4\pi} \int_{-T}^{T} | E_{\frak{a}} \left( z, 1/2+it \right) |^2 dt \sim c T^2,
\end{eqnarray*}
where $c=c(z)$ depends only on the number of elements of $\Gamma$ fixing $z$.
\end{theorem}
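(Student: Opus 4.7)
My plan is to approach this via the heat-kernel method combined with a Karamata Tauberian argument, which is the standard route to local Weyl laws for hyperbolic surfaces. I would let $k_t(z,w)$ be the heat kernel for $-\Delta$ on $\H$, whose Selberg/Harish-Chandra transform is $h_t(r) = e^{-t(1/4 + r^2)}$, and apply the pre-trace formula to the automorphic kernel $K_t(z,z) = \sum_{\g \in \G} k_t(z, \g z)$ to obtain
\begin{equation*}
K_t(z,z) = \sum_j e^{-t(1/4 + t_j^2)} |u_j(z)|^2 + \sum_{\frak{a}} \frac{1}{4\pi} \int_{-\infty}^{\infty} e^{-t(1/4 + u^2)} | E_{\frak{a}}(z, 1/2 + iu) |^2\, du.
\end{equation*}

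First I would establish the small-$t$ asymptotic of the geometric side. The identity element and every element of the finite stabilizer $\G_z$ each contribute $k_t(z,z) \sim 1/(4\pi t)$ as $t \to 0^+$, while for $\g \notin \G_z$ the distance $\r(z, \g z)$ is bounded below by a positive $\delta = \delta(z)$, and $k_t(z, \g z)$ satisfies the Gaussian off-diagonal bound $\ll t^{-1} e^{-\r(z,\g z)^2/(4t)}$. Since the number of $\g$ with $\r(z,\g z) \leq R$ grows at most exponentially in $R$, the off-diagonal sum is $O(t^{-1} e^{-\delta^2/(8t)})$, and we conclude that $K_t(z,z) \sim |\G_z|/(4\pi t)$ as $t \to 0^+$.

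Next I would invoke Karamata's Tauberian theorem. After multiplying through by $e^{t/4}$, the spectral side is the Laplace transform at parameter $t$ of the positive measure
\begin{equation*}
d\mu_z(\lambda) = \sum_j |u_j(z)|^2\, \delta_{t_j^2}(\lambda) + \sum_{\frak{a}} \frac{|E_{\frak{a}}(z, 1/2 + i\sqrt{\lambda})|^2}{4\pi \sqrt{\lambda}}\, d\lambda
\end{equation*}
on $[0, \infty)$ (the finitely many eigenvalues below $1/4$ contribute only an $O(1)$ term that does not affect the asymptotic). Karamata's theorem then yields $\mu_z([0, T^2]) \sim c(z) T^2$ with $c(z) = |\G_z|/(4\pi)$, which is precisely the statement of the theorem after the change of variable $\lambda = t_j^2$.

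The main obstacle will lie in the cofinite non-cocompact case: justifying absolute convergence of the geometric sum $\sum_{\g} k_t(z, \g z)$, interchanging the Laplace transform with the continuous Eisenstein integral, and dealing with the fact that the pointwise asymptotic constant $|\G_z|/(4\pi)$ blows up if $z$ is allowed to approach a cusp, all require the Selberg--Eisenstein truncation machinery. These inputs, although technical, are standard since Selberg's work and are tacitly used in \cite[Lemma~2.3]{phirud}.
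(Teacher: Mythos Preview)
The paper does not give a proof of this statement at all: Theorem~\ref{localweylslaw} is quoted as a known result, with a citation to \cite[p.~86, Lemma~2.3]{phirud}, and is then used as a black box throughout Sections~\ref{result1compactcase} and~\ref{theclassiclaproblemerrorterm}. So there is no ``paper's own proof'' to compare against.

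Your heat-kernel plus Karamata argument is a correct and standard route to the local Weyl law. The identification of the short-time diagonal asymptotic $K_t(z,z)\sim|\G_z|/(4\pi t)$ is right (elliptic stabilizers are finite cyclic, off-diagonal terms are killed by the Gaussian bound combined with the exponential lattice-point count), and your measure $d\mu_z$ is set up so that $\mu_z([0,T^2])$ recovers exactly the left-hand side of the theorem, including the correct $1/(4\pi)$ normalization on the Eisenstein integral. The only caveats you flag yourself: in the cofinite case one must justify the pre-trace formula for the heat kernel and the absolute convergence of the geometric side, but these are indeed standard and pose no obstruction here since the heat kernel decays super-polynomially in the spectral parameter. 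In short, your proposal supplies a self-contained proof of a lemma the paper is content to cite.
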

When $z$ remains in a bounded region of $\mathbb{H}$ (more specifically in a compact set), the constant $c(z)$ is uniformly bounded, depending only on $\G$.

\begin{remark}
Phillips and Rudnick in \cite{phirud} generalize Theorem \ref{philrudn1} and case $a)$ of Theorem \ref{philrudn2} in the case of the $n$-dimensional hyperbolic space $\H^n$ \cite[p.~106]{phirud}. Considering the $n$-th dimensional analogues of Theorems \ref{result1}, \ref{result2} we notice that in order to make our method work for $\H^n$, we need stronger {\lq fixed sign\rq} properties for the $\G$-function. For instance, for Theorem \ref{result1} we would need the property that
\begin{eqnarray*}
\Re \left( \frac{\G(it)}{\G \left(\frac{n+1}{2} + it \right) (1+it)} \right)
\end{eqnarray*}
fixes sign for all $t \in \R$. However, this property fails for $n \geq 4$. 
\end{remark}


\section{$\Omega$-results for the averaged error term $M(X;z,w)$ } \label{result1compactcase}

\subsection{Proof of Theorem \ref{result1} for $\G$ cocompact} The quantity $N(X;z,w)$ can be interpreted as
\begin{equation} \label{interpretationofn}
N(X;z,w) =  K(z,w) = \sum_{\gamma \in \G} k(u (z, \gamma w) ),
\end{equation}
for $k(u) =  \chi_{[0, (X-2)/4]} (u )$. Because $k$ is not smooth, we cannot apply the pre-trace formula to the kernel $K(z,w)$. Instead of that, we work with $M(X;z,w)$. Using \cite[p.~321, eq.~(2.7)]{cham2}, the Selberg Harish-Chandra transform $h(t) = h_X(t)$ of $k(u)$ can be expressed as
\begin{equation}
h(t) = 2 \pi \sinh r P_{-1/2+it}^{-1} (\cosh r),
\end{equation}
where $r = \cosh^{-1}(X/2)$ and $ P_{\nu}^{\mu} (z)$ is the associated Legendre function. 
Using the formula \cite[p.~971, eq.~(8.776.1)]{gradry}, for $t \in \mathbb{R}$ we get
\begin{equation} \label{httransform}
h(t) = 2 \sqrt{\pi} \Re \left(  \frac{\G(it)}{\G \left(\frac{3}{2} +it \right)} X^{i t} \right) \left( X^{1/2} + O (X^{-3/2}) \right).
\end{equation} 
We first deal with $M(X;z,w)$ for the cocompact case. 
\begin{proof}
For $z$ fixed, consider a sequence of points $\{w_n\}_{n=1}^{\infty}$ such that $w_n \to z$. Then, for every $j$ we get
\begin{eqnarray*}
\overline{u_j(w_{n})} \to \overline{u_j(z)},
\end{eqnarray*}
as $n \to \infty$ (where we do not know uniformity in the limit). For $X= e^R$ we define 
\begin{eqnarray*}
 \widehat{M}(R;z,w) : = M(X;z,w).
\end{eqnarray*}
Using the spectral theorem for kernels (see \cite[pg.~104, Theorem~7.4)]{iwaniec}), Theorem \ref{mainformulaclassical}, equations (\ref{seconderrorterm}), (\ref{mtzdefinition}), (\ref{interpretationofn}), estimates about $h(t)$ (\cite[pg.~320, Lemma~2.4.b)]{cham2}) and the fact that $h(t)$ is an even function we get
\begin{equation} \label{almostspectralexpansionerror2}
M (X;z,w_n) =  \sum_{t_j>0 }  u_j(z) \overline{u_j(w_n)} \  \left(\frac{1}{X} \int_{2}^{X} \frac{h_x (t_j)}{x^{1/2}} d x \right)  + O \left(\sum_{1/2<s_j \leq 1}  \frac{1}{X} \int_{2}^{X} \frac{x^{1/2-s_j} }{2s_j-1} d x \right).
\end{equation}
Since the $s_j$'s are discrete, there exists a constant $\sigma = \sigma_{\G} \in (0, 1/2]$, depending only on $\G$, such that $s_j -1/2 \geq \sigma$ for all small eigenvalues. We conclude 
 \begin{equation} \label{almostspectralexpansionerror1}
\sum_{1/2<s_j \leq 1}  \frac{1}{X} \int_{2}^{X} \frac{x^{1/2-s_j} }{2s_j-1} d x  = O( X^{-\sigma} ).
\end{equation}
We use equation (\ref{httransform}) to obtain
\begin{equation}
\widehat{M}(R;z,w_n) = 2  \sqrt{\pi} \sum_{t_j  >0}  u_j(z)\overline{u_j(w_n)}  \Re \left(   \frac{\G(it_j)}{\G \left(\frac{3}{2} +it_j \right)} F(R, t_j)\right) + O(e^{-\sigma R}),
\end{equation}
where
\begin{equation} \label{usefulestimate}
 F(R, t_j) = e^{-R} \int_{2}^{e^R} x^{it_j} \left(1+ O \left(x^{-2} \right) \right) d x = \frac{e^{it_jR} }{1+it_j} +O \left(\frac{e^{-R}}{1+|t_j|} \right).
\end{equation}
Using Stirling's formula \cite[p.~895, eq.~(8.328.1)]{gradry} and Theorem \ref{localweylslaw} we see that
\begin{eqnarray*}
\widehat{M}(R;z,w_n) = 2  \sqrt{\pi} \sum_{t_j >0}  u_j(z)\overline{u_j(w_n)} \Re \left( \frac{\G(it_j)}{\G \left(\frac{3}{2} +it_j \right) (1+ it_j)}  e^{ i t_j R} \right)
+ O(e^{- \sigma R}).
\end{eqnarray*}
 For $A>1$, we split the sum in the intervals $[0, A)$ and $[A, +\infty)$.  Stirling's formula, Theorem \ref{localweylslaw} and Cauchy-Schwarz inequality imply the bound 
\begin{eqnarray*}
\sum_{t_j \geq A}   u_j(z)\overline{u_j(w_n)}  \Re \left( \frac{\G(it_j)}{\G \left(\frac{3}{2} +it_j \right) (1+ it_j)} e^{i t_j R} \right) =O(A^{-1/2}).
\end{eqnarray*}
Let $\epsilon_1 >0$. Since for every $j$ we have $\overline{u_j(w_{n})} \to \overline{u_j(z)}$,
we can find an integer $n_0 = n_0 (\epsilon_1, A)$ such that 
\begin{eqnarray*}
\overline{u_j(w_{n})} = \overline{u_j(z)} +O( \epsilon_1)
\end{eqnarray*}
for every $n \geq n_0$ and for every $j$ such that $0<t_j<A$. Thus, using Theorem \ref{localweylslaw} and Cauchy-Schwarz inequality, for $n \geq  n_0 (\epsilon_1, A)$ we get
\begin{eqnarray*}
 \widehat{M}(R;z,w_n)  &=& 2   \sqrt{\pi} \sum_{0< t_j < A }    |u_j(z)|^2 \Re \left( \frac{\G(it_j)}{\G \left(\frac{3}{2} +it_j \right) (1+ it_j)}  e^{it_j R}  \right) \\
&&+ O \left(  A^{-1/2} + \epsilon_1 +e^{- \sigma R} \right).
\end{eqnarray*}
The sum for $t_j<A$ can be handled by applying Dirichlet's principle (see \cite[p.~96, Lemma 3.3]{phirud}).
\begin{lemma} [Dirichlet's box principle] \label{dirichletsboxprinciple} Let $r_1, r_2, ... , r_n$ be $n$ distinct real numbers and $M>0$, $T>1$. Then, there is an $R$, $M \leq R \leq M T^n$, such that
\begin{eqnarray*}
|e^{ir_jR} -1| < \frac{1}{T}
\end{eqnarray*}
for all $j=1,...,n$.
\end{lemma}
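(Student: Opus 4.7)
The plan is to apply the pigeonhole principle on the $n$-dimensional torus $\mathbb{T}^n = (\R/2\pi\Z)^n$ via the continuous map $\phi : [M, \infty) \to \mathbb{T}^n$ sending $x \mapsto (e^{ir_1 x}, \ldots, e^{ir_n x})$. The goal is to locate a value of $R$ in the prescribed range whose image under $\phi$ is within $1/T$ of $(1, 1, \ldots, 1)$ in every coordinate, and the standard route is to find two sampled points of $\phi$ lying in a common cell of a sufficiently fine partition of $\mathbb{T}^n$ and then subtract.

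First I would partition the unit circle $S^1 \subset \C$ into $T$ arcs whose diameters are at most $1/T$ (absorbing absolute constants into $T$ to make this work literally, since $T$ equal arcs yield diameter $2\sin(\pi/T)$ rather than exactly $1/T$, which is a cosmetic adjustment). Taking the product partition of $\mathbb{T}^n$ then produces $T^n$ cells, each of diameter at most $1/T$ in every coordinate. Next I would consider the $T^n+1$ samples $x_k := kM$ for $k = 1, 2, \ldots, T^n+1$. By the pigeonhole principle applied to $\phi(x_1), \ldots, \phi(x_{T^n+1})$ and the $T^n$ cells, there exist indices $1 \leq a < b \leq T^n+1$ with $\phi(x_a)$ and $\phi(x_b)$ in a common cell.

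Setting $R := x_b - x_a = (b-a)M$ gives $M \leq R \leq M T^n$ by construction. Moreover, for each $j$ one has $|e^{ir_j R} - 1| = |e^{ir_j x_b} - e^{ir_j x_a}|$, and both terms on the right lie in the same arc of $S^1$, so this difference is bounded by the diameter of that arc, namely $1/T$, as required. No genuine obstacle arises in this argument; the only delicate point is the bookkeeping of absolute constants in the subdivision of $S^1$, and this does not affect the form of the statement. In particular, no Diophantine or linear-independence hypothesis on the $r_j$ is needed, which is precisely why this Lemma yields proximity to the identity rather than approach to an arbitrary target on $\mathbb{T}^n$ in the manner of Kronecker's theorem.
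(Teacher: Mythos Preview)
The paper does not prove this lemma; it merely states it with a citation to Phillips--Rudnick \cite[p.~96, Lemma~3.3]{phirud} and then applies it. Your pigeonhole argument on $\mathbb{T}^n$ is the standard proof and is correct. The only point worth noting is the one you already flag: subdividing $S^1$ into $T$ equal arcs gives chord-diameter $2\sin(\pi/T)\approx 2\pi/T$ rather than $1/T$, so to obtain $|e^{ir_jR}-1|<1/T$ literally one needs roughly $\lceil 2\pi T\rceil$ arcs per coordinate, and hence $R\le M(2\pi T)^n$ in place of $R\le MT^n$. This constant is irrelevant for every use of the lemma in the paper, where only the existence of arbitrarily large such $R$ is needed.
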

We apply Dirichlet's principle to the sequence $e^{it_jR}$. For any $M>0$ and any $T>1$ sufficiently large we find an $R$ such that
\begin{eqnarray*}
 \widehat{M}(R;z,w_n)  &=&  2  \sqrt{\pi}   \sum_{0< t_j < A }   |u_j(z)|^2 \Re \left( \frac{\G(it_j)}{\G \left(\frac{3}{2} +it_j \right) (1+ it_j)}\right) \\
&&+ O \left( T^{-1} + A^{-1/2} + \epsilon_1 +e^{- \sigma R} \right).
\end{eqnarray*}
We apply local Weyl's law and Lemma \ref{gammalemma} to the last sum. Local Weyl's law implies that as $A \to \infty$ the sum remains bounded and, for $\G$ cocompact, there exist infinitely many $j$'s such that $u_j(z) \neq 0$. Lemma \ref{gammalemma} implies that all the nonzero terms are negative. Hence, there exists an $A_0$ such that for every $A \geq A_0$:
\begin{equation} \label{finalsumlowerbound}
 \left| \sum_{0< t_j < A}  |u_j(z)|^2 \Re \left( \frac{\G(it_j)}{\G \left(\frac{3}{2} +it_j \right) (1+ it_j)}\right) \right|  \gg 1.
\end{equation}
 Choosing $T$ sufficiently large, $A$ fixed and sufficiently large and $\epsilon_1$ fixed and sufficiently small, we deduce that we can choose $n_0$ fixed such that $M(X;z,w_n) = \Omega_{-}(1)$ for every $n \geq n_0$. Hence, $M(X;z,w) = \Omega_{-}(1)$ for $w$ in a fixed $\delta$-neighbourhood of $z$.
This proves case $a)$ of Theorem \ref{result1}.

 To prove that if $\lambda_1>a$ the limit does not exist, we consider the finite sum
\begin{eqnarray*}
 S_{z,A}(R) = 2  \sqrt{\pi} \sum_{0< t_j < A }    |u_j(z)|^2 \Re \left( \frac{\G(it_j)}{\G \left(\frac{3}{2} +it_j \right) (1+ it_j)}  e^{it_j R}  \right)
\end{eqnarray*}
for $A$ chosen finite and sufficiently large.
We first prove that it attains at least two different values. We differentiate $S_{z,A}(R)$. Since $A$ is finite, we compute
\begin{eqnarray*}
 \frac{\partial S_{z, A}}{\partial R} (R) =  2 \sqrt{\pi} \sum_{0< t_j < A }    |u_j(z)|^2 \Re \left( \frac{\G(it_j)}{\G \left(\frac{3}{2} +it_j \right)} \frac{it_j}{(1+ it_j)}  e^{it_j R}  \right).
\end{eqnarray*}
Applying again Dirichlet's principle we find a sufficiently large $T_0$ and a $R_0$, depending on $T_0$, such that 
\begin{eqnarray*}
 \frac{\partial S_{z, A}}{\partial R} (R_0) = 2  \sqrt{\pi} \sum_{0< t_j < A }    |u_j(z)|^2 \Re \left( \frac{\G(it_j)}{\G \left(\frac{3}{2} +it_j \right)} \frac{it_j}{(1+ it_j)} \right)  + O(A^{1/2}  T_0^{-1}).
\end{eqnarray*}
Assume $t_1 >c$, with $c$ as in Remark \ref{remarkforc} (hence $\lambda_1 > 1/4 +c^2 = a$). We conclude that $\frac{\partial S_{z, A}}{\partial R} (R_0) \neq 0$, hence $S_{z,A}(R)$ is not constant. In particular, it admits at least two different values $B_1, B_2$. Assume we express $B_{\nu}$ as 
\begin{eqnarray*}
B_{\nu} =  2  \sqrt{\pi} \sum_{0< t_j < A }    |u_j(z)|^2 \Re \left( \frac{\G(it_j)}{\G \left(\frac{3}{2} +it_j \right) (1+ it_j)} e^{it_j R_{\nu}}  \right) 
\end{eqnarray*}
for $\nu=1,2$.  We estimate
\begin{eqnarray*}
\left|  \widehat{M}(R;z,w_n)  - B_{\nu} \right| &\ll&   \sum_{0< t_j < A }    |u_j(z)|^2  \left| \Re \left( \frac{\G(it_j)}{\G \left(\frac{3}{2} +it_j \right) (1+ it_j)} \left(e^{it_j (R-R_{\nu})} - 1 \right) \right) \right| \\
&& + O \left( A^{-1/2} + \epsilon_1 +e^{- \sigma R} \right).
\end{eqnarray*}
Applying Dirichlet's principle, for $\nu=1,2$ we find sequences $T_{\mu,\nu} \to \infty$ and sequences $R_{\mu,\nu} \to \infty$ as $\mu \to \infty$ such that for all $t_j \in (0,A)$:
\begin{eqnarray*}
e^{it_j (R_{\mu,\nu} -R_{\nu})}  = 1 + O(T_{\mu,\nu}^{-1}).
\end{eqnarray*}
Hence, we conclude that 
\begin{eqnarray*}
\left|  \widehat{M}(R_{\mu, \nu};z,w_n)  - B_{\nu} \right| = O \left(T_{\mu,\nu}^{-1} + A^{-1/2} + \epsilon_1 +e^{- \sigma R_{\mu,\nu}} \right).
\end{eqnarray*}
Since $R_{\mu, \nu} \to \infty$, we conclude that $M(X,z,w)$ approaches both values $B_1, B_2$ infinitely many times as close as we want as $X \to \infty$\textcolor{red}{.} Since $B_1 \neq B_2$, we conclude that the $M(X,z,w)$ does not have a limit as $X \to \infty$.
\end{proof}

\begin{remark}
In order to prove the lower bound (\ref{finalsumlowerbound}), it is enough to assume that there exists at least one $j$ such that $u_j(z) \neq 0$. Thus, in any such case, the contribution of the discrete spectrum in $M(X;z,w_n)$ is $\Omega_{-}(1)$ for $n \geq n_0$. The same argument holds for the last statement of Theorem \ref{result1}.
\end{remark}

\subsection{Proof of Theorem \ref{result1} for $\G$ cofinite} \label{result1cofinitecase}

For $\G$ cofinite but not cocompact, the hyperbolic Laplacian $-\Delta$ has also continuous spectrum which corresponds to the Eisenstein series $E_{\frak{a}}(z, 1/2+it)$ (see \cite[chapters 3, 6 and 7]{iwaniec}). To deal with the cofinite case (part $b)$ of Theorem \ref{result1}, we have to study the contribution of the continuous spectrum in $M(X;z,w_n)$.  

\begin{proof} Let $\G$ be cofinite but not cocompact. Working as in the proof of the cocompact case we get
\begin{eqnarray*}
M (X;z,w_n) &=&  \sum_{t_j>0 } u_j(z)  \overline{u_j(w_{n})}  \left(\frac{1}{X} \int_{2}^{X} \frac{h_x(t_j)}{x^{1/2}} d x \right) + O(X^{-\sigma})   \\
&&+ \sum_{\mathfrak{a}} \frac{1}{4 \pi} \int_{-\infty}^{\infty}  E_{\mathfrak{a}}(z,1/2+it)   \overline{E_{\mathfrak{a}}(w_n,1/2+it)}  \left( \frac{1}{X} \int_{2}^{X} \frac{h_x(t)}{x^{1/2}} d x \right) dt, 
\end{eqnarray*}
where the second sum is over the cusps $\mathfrak{a}$ of $\G$. Hence, the contribution of the continuous spectrum for the cusp $\frak{a}$ in $M(X;z,w_n)$ is equal to
\begin{equation} \label{continuousaverage}
\frac{1}{4 \pi} \int_{-\infty}^{\infty}  E_{\mathfrak{a}}(z,1/2+it)   \overline{E_{\mathfrak{a}}(w_n,1/2+it)}    \left( \frac{1}{X} \int_{2}^{X} \frac{h_x(t)}{x^{1/2}} d x \right) dt.
\end{equation}
 Set 
\begin{eqnarray*}
A(X) = \int_{-\infty}^{\infty} \frac{1}{X} \int_{2}^{X} \frac{h_x(t)}{x^{1/2}} d x dt.
\end{eqnarray*}
We have the following lemma, which is analogous to Lemma 2.4 in \cite{phirud}. 
\begin{lemma}\label{selbergtransformconvergence}
As $X \to \infty$ we have
\begin{eqnarray*}
\lim_{X \to \infty} A(X) = 4 \pi.
\end{eqnarray*}
\end{lemma}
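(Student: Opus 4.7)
The plan is to evaluate $\int_{-\infty}^\infty h_x(t)\, dt$ in closed form by Fourier/Mellin inversion and then finish with an elementary one-variable estimate.

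First I would interchange the order of integration. From (\ref{httransform}) together with Stirling's formula one has the uniform bound $|h_x(t)|\ll x^{1/2}(1+|t|)^{-3/2}$ for $|t|\geq 1$ and $|h_x(t)|\ll x^{1/2}\log x$ for $|t|\leq 1$, so $\int_{-\infty}^\infty|h_x(t)|\,dt = O(x^{1/2}\log x)$ uniformly in $x\in[2,X]$. Hence the double integral defining $A(X)$ is absolutely convergent and Fubini gives
\begin{equation*}
A(X) = \frac{1}{X}\int_2^X \frac{1}{x^{1/2}}\left(\int_{-\infty}^\infty h_x(t)\,dt\right)dx.
\end{equation*}

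Next I would compute the inner integral in closed form. By the defining property of the Selberg transform, $h_x(t)$ is the eigenvalue of convolution against $k_x(u(z,w)) = \chi_{[0,(x-2)/4]}(u(z,w))$ on the Laplace eigenfunction $\Im(w)^{1/2+it}$; setting $z=i$ and writing $w = \xi+iy$, this reads
\begin{equation*}
h_x(t) = \int_0^\infty \phi_x(y)\, y^{it-3/2}\,dy,\qquad \phi_x(y) = \int_{-\infty}^\infty k_x\!\left(\frac{\xi^2+(y-1)^2}{4y}\right) d\xi,
\end{equation*}
where $\phi_x$ is continuous and compactly supported on $(0,\infty)$. After the substitution $y = e^u$, this identifies $h_x(t)$ with the Fourier transform of $\phi_x(e^u)e^{-u/2}$, and Fourier inversion at $u=0$ yields
\begin{equation*}
\int_{-\infty}^\infty h_x(t)\,dt = 2\pi\phi_x(1) = 2\pi\int_{-\infty}^\infty k_x(\xi^2/4)\,d\xi = 4\pi\sqrt{x-2}.
\end{equation*}

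Substituting gives $A(X) = \frac{4\pi}{X}\int_2^X\sqrt{1-2/x}\,dx$. Since $\sqrt{1-2/x} = 1+O(1/x)$ for $x\geq 4$, a direct calculation shows $A(X) = 4\pi + O(\log X/X) \to 4\pi$, as claimed. The only delicate point is that $\phi_x(e^u)e^{-u/2}$ is continuous but not smooth at $u=0$; however $h_x\in L^1(\R)$ and $\phi_x$ is of bounded variation, so pointwise Fourier inversion at $u=0$ holds by Dirichlet's theorem for Fourier integrals. No other step is a serious obstacle.
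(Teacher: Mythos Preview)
Your argument is correct and the strategy matches the paper's: both proofs write $h_x(t)$ as a Fourier transform of an explicit compactly supported function, invoke Fourier inversion at the origin to get $\int_{-\infty}^\infty h_x(t)\,dt$ in closed form, and finish with the same elementary one-variable estimate yielding the $O((\log X)/X)$ error.

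The only real difference is the Fourier representation chosen. The paper quotes the standard Selberg/Harish--Chandra formula
\[
h(t)=2\sqrt{2}\int_{-\infty}^{\infty} e^{itu}(\cosh r-\cosh u)^{1/2}\chi_{[-r,r]}(u)\,du
\]
and, after averaging in $r$, applies Fourier inversion to the resulting kernel $\Phi_X(u)$, obtaining $A(X)=2\pi\Phi_X(0)$. You instead recover a Fourier representation directly from the eigenfunction identity $\int_{\H}k_x(u(i,w))(\Im w)^{1/2+it}\,d\mu(w)=h_x(t)$, which leads to the Mellin-type formula $h_x(t)=\int_0^\infty \phi_x(y)y^{it-3/2}\,dy$ and then $\int h_x(t)\,dt=2\pi\phi_x(1)=4\pi\sqrt{x-2}$. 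After the obvious change of variables the two final integrals coincide: your $\tfrac{4\pi}{X}\int_2^X\sqrt{1-2/x}\,dx$ equals the paper's $\tfrac{8\pi}{X}\int_1^{X/2}\sqrt{1-1/v}\,dv$. Your route is slightly more self-contained (it does not cite the explicit Abel--Fourier formula for $h$), while the paper's is marginally quicker given that formula; otherwise the proofs are equivalent. Your remark that $h_x\in L^1(\R)$ and $\phi_x$ is continuous of bounded variation is exactly what is needed to justify pointwise inversion at $0$, and the Fubini step is adequately checked.
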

\begin{proof}
The Selberg/Harish-Chandra transform $h(t)$ of $\chi_{[0, (\cosh r -1)/2]}$ can be written in the form 
\begin{eqnarray*}
h(t) = 2 \sqrt{2} \int_{-\infty}^{\infty} e^{itu} (\cosh r - \cosh u)^{1/2} \chi_{[-r, r]}(u) du,
\end{eqnarray*}
see \cite[p.~84, 85, eq.~(2.9), (2.10)]{phirud}. Hence
\begin{eqnarray*}
A(X) =  \int_{-\infty}^{\infty} \int_{-\infty}^{\infty} e^{itu} \Phi_X(u) du dt,
\end{eqnarray*}
where $\Phi_X(u)$ is given by
\begin{eqnarray*}
\Phi_X(u) = \frac{4}{X} \int_{|u|}^{ \cosh^{-1}(X/2)} \sinh r \sqrt{1 - \frac{\cosh u}{\cosh r}} d r.
\end{eqnarray*}
Using the Fourier inversion formula and easy estimates we get
\begin{eqnarray*}
A(X) =  2 \pi \Phi_X(0) =  \frac{8 \pi}{X} \left(\frac{X}{2}-1\right) + O \left(\frac{\log X}{X} \right) \to  4 \pi.
\end{eqnarray*}
\end{proof}
Let $\phi_{\frak{a},n}(t), \phi_{\frak{a}}(t)$ be defined as:
\begin{eqnarray} \label{phifunctions}
\phi_{\frak{a},n}(t) &=&  E_{\mathfrak{a}}(z,1/2+it)   \overline{E_{\mathfrak{a}}(w_n,1/2+it)}  -  |E_{\mathfrak{a}}(z,1/2)|^2, \\
\nonumber \phi_{\frak{a}}(t) &=&  |E_{\mathfrak{a}}(z,1/2+it)|^2 -  |E_{\mathfrak{a}}(z,1/2)|^2.
\end{eqnarray}
 Thus, the contribution of cusp $\frak{a}$ to (\ref{continuousaverage}) can be written in the form
\begin{equation}\label{cuspacontributionexplicitly}
 \frac{1}{4 \pi}  |E_{\mathfrak{a}}(z,1/2)|^2 A(X) +  \frac{1}{4 \pi} \int_{-\infty}^{\infty} \phi_{\frak{a},n}(t)  \left( \frac{1}{X} \int_{2}^{X} \frac{h(t)}{x^{1/2}} dx \right) dt.
\end{equation}
Using equation (\ref{httransform}), the second summand of (\ref{cuspacontributionexplicitly}) takes the form
\begin{eqnarray}
\frac{1}{2 \sqrt{\pi}}\int_{-\infty}^{\infty} \phi_{\frak{a},n}(t) \Re \left( \frac{\G(it)}{\Gamma(3/2 +it)} \frac{1}{1+it} X^{it} \right) dt &+& O \left( \frac{1}{X} \int_{-\infty}^{\infty} \phi_{\frak{a},n}(t)  \Re \left( \frac{\G(it)}{\Gamma(3/2 +it)} \frac{2^{it}}{1+it} \right)  dt \right) \nonumber \\ \label{secondsummand}
&+& O \left( \frac{1}{X^2} \int_{-\infty}^{\infty} \phi_{\frak{a},n}(t) \Re \left( \frac{\G(it)}{\Gamma(3/2 +it)} \frac{1}{1-it} X^{it} \right)  dt \right)
\end{eqnarray}
For any $A>1$  we split the first integral:
\begin{equation}\label{firsttermsplit}
\int_{-A}^{A} \phi_{\frak{a},n}(t) \Re \left( \frac{\G(it)}{\Gamma(3/2 +it)} \frac{1}{1+it} X^{it} \right) dt 
+\int_{|t| > A} \phi_{\frak{a},n}(t) \Re \left( \frac{\G(it)}{\Gamma(3/2 +it)} \frac{1}{1+it} X^{it} \right) dt.
\end{equation}
Since $w_n \to z$, using Theorem \ref{localweylslaw} and Cauchy-Schwarz inequality, the integral for $|t|>A$ is bounded independently of $n$:
\begin{equation}\label{firsttermsplitbound}
\int_{|t| > A} \phi_{\frak{a},n}(t) \Re \left( \frac{\G(it)}{\Gamma(3/2 +it)} \frac{1}{1+it} X^{it} \right) dt = O (A^{-1/2}).
\end{equation}
In $[-A,A]$, we approximate $\phi_{\frak{a},n}(t)$: for every $\epsilon_1 >0$ there exists a $n_0 = n_0(\epsilon_1, A)$ such that for every $n \geq n_0$: 
\begin{eqnarray*}
\phi_{\frak{a},n}(t) = \phi_{\frak{a}}(t) +O (\epsilon_1)
\end{eqnarray*}
for every $t \in [-A,A]$. Thus, we get
\begin{eqnarray*}
\int_{-A}^{A} \phi_{\frak{a},n}(t) \Re \left( \frac{\G(it)}{\Gamma(3/2 +it)} \frac{1}{1+it} X^{it} \right) dt &=&
\int_{-\infty}^{\infty} \phi_{\frak{a}}(t) \Re \left( \frac{\G(it)}{\Gamma(3/2 +it)} \frac{1}{1+it} X^{it} \right) dt \\
&-&\int_{|t|>A} \phi_{\frak{a}}(t) \Re \left( \frac{\G(it)}{\Gamma(3/2 +it)} \frac{1}{1+it} X^{it} \right) dt \\
&+& O \left( \epsilon_1 \int_{-A}^{A} \Re \left( \frac{\G(it)}{\Gamma(3/2 +it)} \frac{1}{1+it} X^{it} \right) dt \right) . 
\end{eqnarray*}
 Using the bound  $\phi_{\frak{a}}(t) = O(t)$ for small $t$  and Theorem \ref{localweylslaw} for $t \to \infty$ we get that the function 
$$\theta_{\frak{a}}(t) = \phi_{\frak{a}}(t) \frac{\G(it)}{\Gamma(3/2 +it)} \frac{1}{1+it}$$
is in $L^1(\mathbb{R})$. Applying the Riemann-Lebesgue Lemma to the first term we conclude that it converges to $0$ as $X \to \infty$.
We work as for (\ref{firsttermsplitbound}) to see that the second term is bounded by $O (A^{-1/2})$. Using that the function
$$g_{\frak{a}}(t) = \Re \left( \frac{\G(it)}{\Gamma(3/2 +it)} \frac{1}{1+it} X^{it} \right)$$
is in $L^1(\mathbb{R})$ uniformly in $X$, we see that the third term is $O(\epsilon_1)$. Using trivial estimates instead of the Riemann-Lebesgue Lemma in (\ref{secondsummand}) for the $O$-terms, we conclude that for every $\epsilon_1>0, A>1$ there exists a $n_0 = n_0(\epsilon_1, A)$ such that for every $n \geq n_0$:
\begin{eqnarray*} 
\frac{1}{4 \pi} \int_{-\infty}^{\infty} \phi_{\frak{a},n}(t)  \left( \frac{1}{X} \int_{2}^{X} \frac{h(t)}{x^{1/2}} dx \right) dt = O (\epsilon_1 +A^{-1/2}) + o(1).
\end{eqnarray*}
Thus, choosing $\epsilon_1 = A^{-1/2}$ we conclude that for every $\epsilon_1>0$ there exists a $n_0 = n_0(\epsilon_1)$ such that for every $n \geq n_0$ the contribution of the continuous spectrum to $M(X;z,w_n)$ is equal to 
\begin{equation} \label{continuouscontributiozequalw}
 \sum_{\mathfrak{a}} \left|E_{\mathfrak{a}}(z,1/2) \right|^2 + O(\epsilon_1) + o(1).
\end{equation}
Case $b)$ of Theorem \ref{result1} follows for $\epsilon_1$ sufficiently small and fixed. 
\end{proof}
When there is no contribution from the eigenvalues $\lambda_j>1/4$, the following proposition follows immediately from  (\ref{continuouscontributiozequalw}).
\begin{proposition} If either $\G$ does not have eigenvalues $\lambda_j>1/4$ or $u_j(z) = 0$ for every $\lambda_j> 1/4$, then we have
\begin{displaymath}
\lim_{X \to \infty} M(X;z,z) = \sum_{\mathfrak{a}} \left|E_{\mathfrak{a}}(z,1/2) \right|^2.
\end{displaymath}
\end{proposition}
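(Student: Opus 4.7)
The plan is to specialize the continuous-spectrum analysis carried out in Subsection \ref{result1cofinitecase} to the case $w = z$, where the hypothesis kills the discrete-spectrum contribution coming from eigenvalues above $1/4$. Because we take $w$ equal to $z$ exactly rather than passing to a limit $w_n \to z$, the approximation parameter $\epsilon_1$ of (\ref{cuspacontributionexplicitly}) disappears entirely and no threshold $n_0$ is needed; the argument becomes an unconditional convergence statement rather than a lower-bound-on-$\limsup$ estimate.

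First I would write out the full spectral expansion of $M(X;z,z)$ as in (\ref{almostspectralexpansionerror2}), splitting it into three pieces: the small-eigenvalue remainder of size $O(X^{-\sigma})$ coming from (\ref{almostspectralexpansionerror1}); the sum over $t_j > 0$; and the continuous-spectrum integral. Under the stated hypothesis every term in the $t_j > 0$ sum is either absent or carries the factor $|u_j(z)|^2 = 0$, so this piece vanishes identically. This is the only step where the hypothesis enters, and it is trivial. Next I would handle the continuous-spectrum contribution by writing, for each cusp $\mathfrak{a}$,
\[
|E_{\mathfrak{a}}(z, 1/2 + it)|^2 = |E_{\mathfrak{a}}(z, 1/2)|^2 + \phi_{\mathfrak{a}}(t)
\]
with $\phi_{\mathfrak{a}}$ as in (\ref{phifunctions}). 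The constant piece produces $\frac{1}{4\pi}|E_{\mathfrak{a}}(z, 1/2)|^2 A(X)$, which tends to $|E_{\mathfrak{a}}(z, 1/2)|^2$ by Lemma \ref{selbergtransformconvergence}. The $\phi_{\mathfrak{a}}$ piece, after using (\ref{httransform}) to replace $h_x(t)/x^{1/2}$ by $2\sqrt{\pi}\,\Re\bigl(\Gamma(it)/\Gamma(3/2+it)\cdot x^{it}\bigr)$ plus lower-order errors and evaluating $\frac{1}{X}\int_2^X x^{it}dx = X^{it}/(1+it) + O(X^{-1})$, reduces to a Fourier integral of the function
\[
\theta_{\mathfrak{a}}(t) = \phi_{\mathfrak{a}}(t)\,\frac{\Gamma(it)}{\Gamma(3/2+it)(1+it)}
\]
against $X^{it}$, to which I would apply the Riemann--Lebesgue lemma.

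The main technical obstacle, as in the proof of Theorem \ref{result1}(b), is verifying that $\theta_{\mathfrak{a}} \in L^1(\mathbb{R})$. Near $t = 0$ the pole of $\Gamma(it)$ is cancelled by $\phi_{\mathfrak{a}}(t) = O(t)$, which follows from the continuity of $t \mapsto |E_{\mathfrak{a}}(z, 1/2+it)|^2$ at $t = 0$; at infinity, Stirling gives $|\Gamma(it)/\Gamma(3/2+it)| \asymp |t|^{-3/2}$ and Theorem \ref{localweylslaw} supplies the $O(T^2)$ averaged bound on $|E_{\mathfrak{a}}(z, 1/2 + it)|^2$ needed to integrate against $(1+|t|)^{-5/2}$. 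The lower-order $O(X^{-1})$ and $O(X^{-2})$ contributions from (\ref{httransform}) are absorbed by crude estimates. Summing over the finitely many cusps yields the claimed limit.
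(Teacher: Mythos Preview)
Your proposal is correct and follows the same route as the paper. The paper simply remarks that the proposition follows immediately from (\ref{continuouscontributiozequalw}) once the discrete-spectrum sum vanishes under the hypothesis; you are spelling out that same continuous-spectrum computation specialized to $w=z$ (so that $\phi_{\mathfrak{a},n}=\phi_{\mathfrak{a}}$ and the $\epsilon_1$-approximation step disappears), which is exactly the content behind (\ref{continuouscontributiozequalw}).
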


\section{$\Omega$-results for the error term $e(X;z,w)$} \label{theclassiclaproblemerrorterm}

We now prove Theorem \ref{result2}. Assume that $\G$ is cofinite but not cocompact. In order to deal with the error term $e(X;z,w)$ we mollify it as in Phillips and Rudnick \cite{phirud}. Let $\psi \geq 0$ be a smooth even function compactly supported in $[-1,1]$,  such that 
\begin{eqnarray}
\int_{-\infty}^{+\infty} \psi(x) e^{-itx} dx  := \hat{\psi} (t) \geq 0
\end{eqnarray}
and $\int_{-\infty}^{\infty} \psi(x) dx = 1$. For every $\epsilon >0$ we also define the family of functions $\psi_{\epsilon}(x) = \epsilon^{-1} \psi(x/\epsilon)$. We have $0 \leq \hat{\psi}_{\epsilon}(x) \leq 1$ and $\hat{\psi}_{\epsilon}(0) = 1$. We study the contribution of the discrete spectrum first. 

\subsection{The contribution of the discrete spectrum} For $z$ fixed, we pick again a sequence $\{w_n\}_{n=1}^{\infty}$ converging to $z$. For every $n \geq 1$ we define
\begin{equation} \label{quantitynxzk}
\widehat{e}_{n} (R,z)  =  \frac{e(e^R;z,w_{n})}{e^{R/2}},
\end{equation}
and we consider the convolution
\begin{eqnarray*}
\widehat{e}_{n, \epsilon} (R,z) = \left(\psi_{\epsilon} \ast \widehat{e}_{n} (\cdot,z) \right) (R) := \int_{-\infty}^{\infty} \psi_{\epsilon} (R-Y) \widehat{e}_{n} (Y,z) dY.
\end{eqnarray*}
In order to prove a lower bound for $\widehat{e}_{n} (R,z) $ it suffices to prove a lower bound for $\widehat{e}_{n, \epsilon} (R,z)$.

Using the pre-trace formula for $\L(\GmodH)$ (\cite[pg.~104, Theorem~7.4)]{iwaniec}), the expression (\ref{httransform}), the bound 
\begin{equation} \label{usefulboundpsiepsilon}
\hat{\psi}_{\epsilon}(t_j) = O_k \left( (\epsilon|t_j|)^{-k} \right)
\end{equation} 
for every $k \in \mathbb{N}$ and working as in \cite{phirud} we conclude that the contribution of the discrete spectrum in $\widehat{e}_{n, \epsilon} (R,z)$ is equal to: 
\begin{equation} \label{disccontr1}
2 \sqrt{\pi}  \sum_{t_j>0 } u_j(z) \overline{u_j(w_{n})} \Re \left( \frac{\G(it_j)}{\G \left(\frac{3}{2} +it_j \right)}   e^{it_j R} \right) \hat{\psi}_{\epsilon} (t_j)  + O \left( e^{- \sigma R} \right).
\end{equation}
Let $A>1$. Clearly for $k \in \mathbb{N}$ we have $k > 1/2$ and thus, using the estimate (\ref{usefulboundpsiepsilon}), we can bound the tail of the series for $t_j > A$. Applying (\ref{usefulboundpsiepsilon}), Theorem \ref{localweylslaw} and Stirling's formula, we conclude that (\ref{disccontr1}) takes the form
\begin{equation} \label{disccontr2}
 2 \sqrt{\pi}  \sum_{0<t_j <A }  u_j(z) \overline{u_j(w_{n})} \Re \left( \frac{\G(it_j)}{\G \left(\frac{3}{2} +it_j \right)}   e^{it_j R} \right)  \hat{\psi}_{\epsilon} (t_j) + O \left( A^{1/2-k} \epsilon^{-k} +  e^{- \sigma R} \right).
\end{equation}
Let $\epsilon_1 >0$. We find again an integer $n_0 = n_0 (\epsilon_1, A)$ such that 
\begin{eqnarray*}
\overline{u_j(w_{n})} = \overline{u_j(z)} +O( \epsilon_1)
\end{eqnarray*}
for every $n \geq n_0$ and for every $j$ such that $0<t_j<A$.
Since $\hat{\psi}_{\epsilon}(x) $ is bounded, Cauchy-Schwarz inequality, weak Weyl's law (i.e. $\{ j : |t_j| \leq T \} \ll T^2$) and Theorem \ref{localweylslaw} yield that the quantity in (\ref{disccontr2}) is
\begin{eqnarray*}
2  \sqrt{\pi} \sum_{0<t_j <A }    |u_j(z)|^2   \Re \left( \frac{\G(it_j)}{\G \left(\frac{3}{2} +it_j \right)}   e^{it_j R} \right)   \hat{\psi}_{\epsilon} (t_j) +  O \left( A^{1/2-k} \epsilon^{-k} +  A^{1/2} \epsilon_1 +  e^{- \sigma R} \right). 
\end{eqnarray*}
Applying Dirichlet's principle \label{dirichletslemma} for the exponentials $e^{it_j R}$, for any $T>1$ sufficiently large we find an $R$ such that $e^{it_jR} = 1 +O(T^{-1})$,
thus concluding that the contribution of the discrete spectrum to $\widehat{e}_{n, \epsilon} (R,z)$ takes the form 
\begin{eqnarray*}
2 \sqrt{\pi}  \sum_{0<t_j <A }  |u_j(z)|^2    \Re \left( \frac{\G(it_j)}{\G \left(\frac{3}{2} +it_j \right)}   \right) \hat{\psi}_{\epsilon} (t_j) + O \left( T^{-1} A^{1/2} +  A^{1/2-k} \epsilon^{-k} + A^{1/2} \epsilon_1 +  e^{- \sigma R} \right).
\end{eqnarray*}
By part a) of Lemma \ref{gammalemma}, the coefficients in the sum are all negative, whereas the balance $\epsilon^{-1} = A^{1-3/(2k+2)}$, $\epsilon_1 = A^{-1/2}\epsilon $ implies that the error term is $O((T^{-1} A^{1/2} + \epsilon + e^{- \sigma R})$. For the function $\psi$, there exists one $\tau \in (0,1)$ such that $\hat{\psi}(x) \geq 1/2$ whenever $|x| \leq \tau$. Using this, local Weyl's law and the fact that $\hat{\psi}_{\epsilon}(t_j) = \hat{\psi} (\epsilon t_j)$ we bound the modulus of the above main term from below by 
\begin{eqnarray*} 
 \sum_{0<t_j <A }   |u_j(z)|^2 \hat{\psi}_{\epsilon} (t_j)  \left| \Re \left( \frac{\G(it_j)}{\G \left(\frac{3}{2} +it_j \right)}   \right)     \right| &\gg& \sum_{0<t_j < \tau/\epsilon}  |u_j(z)|^2  \left| \Re \left( \frac{\G(it_j)}{\G \left(\frac{3}{2} +it_j \right)}   \right) \right|.
\end{eqnarray*}
If $\G$ has sufficiently many cusp forms, we obtain the bound
\begin{eqnarray*}
 \sum_{0<t_j < \tau/\epsilon}  |u_j(z)|^2  \left| \Re \left( \frac{\G(it_j)}{\G \left(\frac{3}{2} +it_j \right)}   \right) \right|    \gg \epsilon^{-1/2}.
\end{eqnarray*}
Since the error is $O(T^{-1} A^{1/2} + \epsilon + e^{- \sigma R})$, there exists a fixed, sufficiently small $\epsilon_0>0$ such that, for $T$ and $R$ sufficiently large, $\epsilon_0^{-1/2}$ dominates the error. Therefore there exists a fixed integer $n_0 = n_0(\epsilon_0)$ such that for every $n \geq n_0$ the contribution of the discrete spectrum in $\widehat{e}_{n, \epsilon_0} (R,z)$ is $\Omega_{-}(1)$, i.e. for every $n \geq n_0$ the contribution in $e (X; z, w_{n})$ is $\Omega_{-}(X^{1/2})$.


\subsection{The contribution of the continuous spectrum} We have to consider the contribution of the continuous spectrum in $\widehat{e}_{n, \epsilon} (R,z) $, which is given by 
\begin{equation} \label{cofinitecasecontinuouscontribution}
 \sum_{\frak{a}} \frac{1}{4 \pi} \int_{-\infty}^{\infty}  E_{\mathfrak{a}}(z,1/2+it) \overline{E_{\mathfrak{a}}(w_{n},1/2+it)} \left(\int_{-\infty}^{\infty} \psi_{\epsilon} (R-Y)  \frac{h_{e^Y} (t)}{e^{Y/2}} dY \right) dt.
\end{equation}
Let $\phi_{\frak{a},n}(t)$ be as in equation (\ref{phifunctions}). Hence, for $h(t) = h_{e^Y}(t)$ the contribution of cusp $\frak{a}$ in (\ref{cofinitecasecontinuouscontribution}) takes the form
\begin{equation} \label{secondsummandofcuspa}
 \frac{1}{4 \pi}   |E_{\mathfrak{a}}(z,1/2)|^2 \int_{-\infty}^{\infty}  \left(\int_{-\infty}^{\infty} \psi_{\epsilon} (R-Y)  \frac{h(t)}{e^{Y/2}} dY \right) dt + \frac{1}{4 \pi} \int_{-\infty}^{\infty} \phi_{\frak{a},n}(t)  \left(\int_{-\infty}^{\infty} \psi_{\epsilon} (R-Y)  \frac{h(t)}{e^{Y/2}} dY \right) dt.
\end{equation}
Using \cite[p.~98,eq.~(3.30)]{phirud} we get
\begin{eqnarray*}
 \frac{1}{4 \pi} |E_{\mathfrak{a}}(z,1/2)|^2 \int_{-\infty}^{\infty}  \left(\int_{-\infty}^{\infty} \psi_{\epsilon} (R-Y)  \frac{h(t)}{e^{Y/2}} dY \right) dt =  |E_{\mathfrak{a}}(z,1/2)|^2   + O(e^{-R}).
\end{eqnarray*}
Using that $\psi(x)$ has support in $[-1,1]$ and quoting expansion (\ref{httransform}) 
we see that the second summand of (\ref{secondsummandofcuspa}) takes the form
\begin{equation} \label{phiacontribution}
\frac{1}{2 \sqrt{\pi}}  \Re \left( \int_{-\infty}^{\infty} \phi_{\frak{a},n}(t)  \frac{\G(it)}{\G(3/2+it)} e^{itR} \hat{\psi}_{\epsilon} (t) dt \right) 
+  O \left(   e^{-2R}  \int_{-\infty}^{\infty} \phi_{\frak{a},n}(t)   \Re \left( \frac{\G(it)}{\G(3/2+it) (-2+it)} \right)   dt   \right).
\end{equation}
 For the first term of (\ref{phiacontribution}), we work as in subsection \ref{result1cofinitecase}: we split the integral for $t \in [-A,A]$ and $|t|> A$. For $|t|>A$ we apply Theorem \ref{localweylslaw} and the bound (\ref{usefulboundpsiepsilon}) to get 
\begin{eqnarray*}
\Re \left( \int_{|t|>A} \phi_{\frak{a},n}(t)  \frac{\G(it)}{\G(3/2+it)} e^{itR} \hat{\psi}_{\epsilon} (t) dt \right) = O(\epsilon^{-1} A^{-1/2}),
\end{eqnarray*}
 independently of $n$. We approximate $\phi_{\frak{a},n}(t)$ uniformly $\epsilon_1$-close to $\phi_{\frak{a}}(t)$:    
$
\phi_{\frak{a},n}(t) = \phi_{\frak{a}}(t) + O(\epsilon_1)$ for every $n \geq n_0 = n_0(\epsilon_1)$ and for every $t \in [-A,A]$. The function $\phi_{\frak{a}}(t)$ satisfies the bounds $\phi_{\frak{a}}(t) = O(t)$ for small $t$ and $O(t^2)$ for large $t$. Using $\hat{\psi}_{\epsilon}(0)=1$, $\hat{\psi}_{\epsilon} (t) = O( (\epsilon |t|)^{-k})$) we deduce that, for any fixed $\epsilon>0$, the function 
\begin{eqnarray*}
 \phi_{\frak{a}}(t)  \frac{\G(it)}{\G(3/2+it)}\epsilon^2 \hat{\psi}_{\epsilon} (t) 
\end{eqnarray*}
is in $L^1(\mathbb{R})$ independently of $\epsilon$. Applying the Riemann-Lebesgue Lemma and local Weyl's law we deduce that 
\begin{eqnarray*}
\frac{1}{2 \sqrt{\pi}}  \Re \left( \int_{-A}^{A} \phi_{\frak{a}}(t)  \frac{\G(it)}{\G(3/2+it)} e^{itR} \hat{\psi}_{\epsilon} (t) dt \right)  
&=& \epsilon^{-2} G(R) + O(\epsilon^{-1} A^{-1/2}),
\end{eqnarray*}
with $G(R) = o(1)$, independently of $\epsilon$. Since the function
\begin{eqnarray*}
 \Re \left( \frac{\G(it)}{\G(3/2+it)} e^{itR} \right) =  \Re \left(\frac{\G(it)}{\G(3/2+it)}  \right)  \cos (tR)  - \Im \left( \frac{\G(it)}{\G(3/2+it)} \right)  \sin(tR)
\end{eqnarray*}
is bounded as $t \to 0$ we conclude it is in $L^1(\mathbb{R})$, hence 
\begin{eqnarray*}
\frac{1}{2 \sqrt{\pi}}  \Re \left( \epsilon_1 \int_{-A}^{A}  \frac{\G(it)}{\G(3/2+it)} e^{itR} \hat{\psi}_{\epsilon} (t) dt \right) = O(\epsilon_1).
\end{eqnarray*}
Working similarly we get
\begin{eqnarray*}
 e^{-2R}  \int_{-\infty}^{\infty} \phi_{\frak{a},n}(t)   \Re \left( \frac{\G(it)}{\G(3/2+it) (-2+it)} \right)   dt = O( e^{-2R})
\end{eqnarray*}
uniformly, i.e. independently of $n$.
Using the balance $\epsilon^{-2} = A^{1/2}$ and $\epsilon_1 = \epsilon$, we conclude that the contribution of the continuous spectrum in $\widehat{e}_{n, \epsilon} (R,z)$ can be finally written in the form
\begin{eqnarray*}
 \sum_{\frak{a}} |E_{\mathfrak{a}}(z,1/2)|^2  + \epsilon^{-2} G(R) + O(\epsilon + e^{-R}).
\end{eqnarray*}

\subsection{Proof of part a) of Theorem \ref{result2} } 

 Since $\G$ has suffieciently many cusp forms, the contribution of the discrete spectrum in $\widehat{e}_{n, \epsilon} (R,z)$ is of the form $k(\epsilon) + O(T^{-1} A^{1/2} + \epsilon + e^{-\sigma R})$ with $k(\epsilon) = \Omega_{-} ( \epsilon^{-1/2})$. Pick a fixed $\epsilon_0$ such that $\epsilon_0^{-1/2}$ dominates the $\sum_{\frak{a}} |E_{\mathfrak{a}}(z,1/2)|^2$ and the $O(\epsilon_0)$-terms. Let $T, R \to \infty$. Since $\epsilon_0$ is fixed, $\epsilon_0^{-2} G(R) \to 0$ as $R \to \infty$. Thus, for every $n \geq n_0 (\epsilon_0)$:
\begin{eqnarray*}
\widehat{e}_{n, \epsilon_0} (R,z) = \Omega_{-}(1),
\end{eqnarray*}
which implies $e(X;z, w_n) = \Omega_{-} (X^{1/2})$ for every $n \geq n_0$ .


\subsection{Proof of part b) of Theorem \ref{result2} } 

Assume that $\G$ has null-vectors. We have to prove that $e(X;z,w) = \Omega_{+}(X^{1/2})$ for $w$ in a small neighboorhood $B(z,\delta)$ of $z$. By Theorem \ref{philrudn1} of Phillips and Rudnick in \cite{phirud} we have $e(X;z,z) = \Omega_{+}(X^{1/2})$. Hence, in order to prove part $b)$, it suffices to prove the following Proposition.

\begin{proposition} If $\G$ has null-vectors, then there exists a $\delta = \delta_{\Gamma,z}>0$ such that for every $w \in B(z,\delta)$
 \begin{equation} \label{mnrt3}
\lim_{T \to \infty} \frac{1}{T} \int_{0}^{T}  \frac{e(e^r;z,w)}{e^{r/2}} dr > \frac{1}{2}  \sum_{\mathfrak{a}} \left|E_{\mathfrak{a}}(z,1/2) \right|^2.
\end{equation}
\end{proposition}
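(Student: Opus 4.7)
The plan is to imitate the proof of the cofinite case of Phillips-Rudnick's Theorem \ref{philrudn1}, but for $z\neq w$ with $w$ close to $z$. The key claim is that the limit in (\ref{mnrt3}) exists and equals
\[\sum_{\mathfrak{a}} E_{\mathfrak{a}}(z,1/2)\, E_{\mathfrak{a}}(w,1/2).\]
Since $\overline{E_{\mathfrak{a}}(z,s)} = E_{\mathfrak{a}}(z,\bar s)$, each $E_{\mathfrak{a}}(z,1/2)$ is real; together with real-analyticity of $z \mapsto E_{\mathfrak{a}}(z,1/2)$ and finiteness of the cusp set, this gives
\[\sum_{\mathfrak{a}} E_{\mathfrak{a}}(z,1/2)\, E_{\mathfrak{a}}(w,1/2) = \sum_{\mathfrak{a}} |E_{\mathfrak{a}}(z,1/2)|^{2} + O(|w-z|).\]
Under the null-vector hypothesis the leading term is strictly positive, so (\ref{mnrt3}) follows by choosing $\delta = \delta_{\G,z}>0$ sufficiently small.

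To establish the limit I would use the spectral decomposition of $e(e^r;z,w)/e^{r/2}$ together with the expansion (\ref{httransform}), $h(t)/e^{r/2} = 2\sqrt{\pi}\,\Re(\Gamma(it)/\Gamma(3/2+it)\,e^{itr}) + O(e^{-2r})$, and Ces\`aro-average on $[0,T]$. The discrete contribution reads
\[\sum_{t_j>0} 4\sqrt{\pi}\, u_j(z)\, u_j(w)\,\Re\!\left(\frac{\Gamma(it_j)}{\Gamma(3/2+it_j)}\,\frac{e^{it_jT}-1}{it_jT}\right) = O_{z,w}(1/T) \to 0,\]
the sum being absolutely convergent by Stirling, Cauchy-Schwarz and local Weyl's law (Theorem \ref{localweylslaw}). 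Fubini reduces the continuous contribution to analyzing the kernel
\[\mathcal A_T(t) := \frac{1}{T}\int_0^T \frac{h_{e^r}(t)}{e^{r/2}}\,dr.\]
The pole of $\Gamma(it)$ at $t=0$ combined with $1/\Gamma(3/2) = 2/\sqrt{\pi}$ produces, near $t=0$, the Fej\'er-type main term $4(1-\cos(tT))/(t^2 T)$ of total mass $4\pi$, while Stirling's decay yields $\mathcal A_T(t) = O(|t|^{-5/2}/T)$ for $|t|$ away from $0$. Together these make $\mathcal A_T$ an approximate identity of mass $4\pi$ concentrating at the origin, and applied to the continuous integrand this gives
\[\lim_{T\to\infty} \frac{1}{4\pi}\sum_{\mathfrak{a}} \int_{-\infty}^{\infty} E_{\mathfrak{a}}(z,1/2+it)\overline{E_{\mathfrak{a}}(w,1/2+it)}\,\mathcal A_T(t)\,dt = \sum_{\mathfrak{a}} E_{\mathfrak{a}}(z,1/2)\, E_{\mathfrak{a}}(w,1/2),\]
using continuity at $t=0$ and reality of $E_{\mathfrak{a}}(\cdot,1/2)$.

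The main obstacle is the rigorous justification of the Fej\'er-kernel concentration for $\mathcal A_T$: the tail of $\mathcal A_T$ integrated against the Eisenstein product must vanish as $T\to\infty$, and only the value at $t=0$ of that product should survive. Both reduce to Stirling's bound $|\Gamma(it)/\Gamma(3/2+it)| = O(|t|^{-3/2})$ together with local Weyl's law (Theorem \ref{localweylslaw}) controlling $\int_{|t|>A}|E_{\mathfrak{a}}(z,1/2+it)|^{2}\,dt$, uniformly for $w$ in a fixed neighborhood of $z$; these are exactly the estimates used in Section \ref{theclassiclaproblemerrorterm}. Once the limit formula is established, the continuity-in-$w$ step for the Eisenstein series completes the proof.
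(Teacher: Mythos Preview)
Your proposal is correct and follows essentially the same route as the paper: the discrete spectrum is dispatched via Stirling plus local Weyl's law to give $O(T^{-1})$, and the continuous spectrum is handled by recognising $\mathcal A_T$ as a Fej\'er-type approximate identity of total mass $4\pi$ (this is exactly Phillips--Rudnick's Lemma~2.4, which the paper also invokes). The only cosmetic difference is the order of limits: you evaluate the $T\to\infty$ limit for general $w$ to obtain $\sum_{\mathfrak a} E_{\mathfrak a}(z,1/2)\,E_{\mathfrak a}(w,1/2)$ and then let $w\to z$, whereas the paper first approximates $\overline{E_{\mathfrak a}(w_n,1/2+it)}$ by $\overline{E_{\mathfrak a}(z,1/2+it)}$ uniformly on $[-A,A]$ (via the functions $\phi_{\mathfrak a,n}$ of Section~\ref{result1cofinitecase}) and only then passes to the limit in $T$.
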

Since the proof is a routine using the ideas used in the proof of Theorem \ref{philrudn1} in \cite{phirud} and in section \ref{result1compactcase}, we only sketch the basic steps.
\begin{proof}(sketch) For any $w_n \to z$, the contribution of Maass forms in 
\begin{eqnarray*}
 \frac{1}{T} \int_{0}^{T}  \frac{e(e^r;z,w_n)}{e^{r/2}} dr
\end{eqnarray*} 
 is estimated using expression (\ref{httransform}), the local Weyl's law and the Cauchy-Schwarz inequality:
\begin{eqnarray*}
 &&  2 \sqrt{\pi} \sum_{t_j >0}  u_j(z)\overline{u_j(w_n)} \Re \left( \frac{\G(it_j)}{\G \left(\frac{3}{2} +it_j \right)} \frac{1}{T} \int_{0}^{T}  e^{ i t_j R} dr\right) + O \left( T^{-1} + \sum_{1/2<s_j \leq 1} \frac{1}{T} \int_{0}^{T} \frac{e^{r(1/2-s_j)} }{2s_j-1} d r \right)  \\
&&= O \left( T^{-1} + T^{-1} \sum_{t_j >0}  \frac{u_j(z)\overline{u_j(w_n)}} {|t_j|^{5/2}}     \right)  =  O \left( T^{-1} \right). 
\end{eqnarray*} 
The contribution of Eisenstein series is equal to
\begin{eqnarray*}
\sum_{\frak{a}} \frac{1}{4 \pi} \int_{-\infty}^{\infty}  E_{\mathfrak{a}}(z,1/2+it)   \overline{E_{\mathfrak{a}}(w_n,1/2+it)}    \left( \frac{1}{T} \int_{0}^{T} \frac{h_(t)}{e^{r/2}} d r \right) dt.
\end{eqnarray*}
Let $\phi_{\frak{a},n}(t)$ be as in section 3. Using \cite[p.~87, Lemma~2.4]{phirud} and working as in section \ref{result1compactcase} for  $\phi_{\frak{a},n}(t)$, for every $\epsilon_1>0$ there a $n_0$ such that for every $n \geq n_0$ the contribution of the continuous spectrum is
\begin{eqnarray*}
\sum_{\frak{a}} |E_{\mathfrak{a}}(z,1/2)|^2   + O(\epsilon_1).  
\end{eqnarray*}
Thus, for $\epsilon_1$ sufficiently small and fixed the proposition follows.
\end{proof}

\end{document}